\documentclass{amsart}
\usepackage[utf8]{inputenc}
\usepackage{graphicx,graphics}
\usepackage{fullpage}

\usepackage{amsmath,amssymb}
\usepackage{cite}
\usepackage{hyperref}

\usepackage{epstopdf}
\usepackage{bbm}
\usepackage{scalerel}
\usepackage[inline]{enumitem}

\usepackage{mathtools}
\usepackage{color,xcolor} % erlaubt farbiges Schreiben, kann nach
\usepackage{mathrsfs,bbm} 
\DeclareMathAlphabet\mathbfcal{OMS}{cmsy}{b}{n}

\numberwithin{equation}{section}

%% Commands
\newcommand{\norm}[1]{\left\lVert#1\right\rVert}
\newcommand{\abs}[1]{\left|#1\right|}

\newcommand{\cqg}{ g}

\newcommand{\cqK}{ K}

\newcommand{\cqW}{ W}

\newcommand{\R}{\mathbb{R}}

\newcommand{\pt}{\partial_t}
\newcommand{\bg}{g}

\newcommand{\K}{ K}
\newcommand{\X}{ X}
\newcommand{\Y}{ Y}

\newcommand{\Id}{\mathrm I\!\hspace{0.7pt}\mathrm d}

\newcommand{\pttau}{\partial_t^\tau}

\renewcommand{\H}{ H}
\renewcommand{\Re}{\text{Re }}
\renewcommand{\Im}{\text{Im }}
\newcommand{\epsm}{\epsilon_{\operatorname{mach}}}
\newtheorem{theorem}{Theorem}[section]
\newtheorem{lemma}[theorem]{Lemma}
\newtheorem{remark}[theorem]{Remark}
\newtheorem{corollary}[theorem]{Corollary}
\newcommand{\eremk}{\hbox{}\hfill\rule{0.8ex}{0.8ex}}
\newenvironment{numberedproof}[2][Proof]{\noindent \emph{#1 #2} }{\hfill \qed}

\author{Jens M. Melenk}
\address{ TU Wien, 
	Institute for Analysis and Scientific Computing, Wiedner Hauptrasse 8-10, 1040 Vienna, Austria}
\email{jens.melenk@tuwien.ac.at}
\author{ Jörg Nick}
\address{ETH Zürich,	Seminar for Applied Mathematics,
	 Rämistrasse 101, CH-8092 Zürich, Switzerland %  \\
}
\email{joerg.nick@math.ethz.ch}
\title[Parsimonious convolution quadrature]{Parsimonious convolution quadrature}
\begin{document}
	\maketitle
\begin{abstract} 
We present a method to rapidly approximate convolution quadrature (CQ) approximations, based on a piecewise polynomial interpolation of the Laplace domain operator, which we call the \emph{parsimonious} convolution quadrature method. For implicit Euler and second order backward difference formula based discretizations, we require $O(\sqrt{N}\log N)$ evaluations in the Laplace domain to approximate $N$ time steps of the convolution quadrature method to satisfactory accuracy. The methodology proposed here differentiates from the well-understood fast and oblivious convolution quadrature \cite{SLL06}, since it is applicable to Laplace domain operator families that are only defined and polynomially bounded on a positive half space, which includes acoustic and electromagnetic wave scattering problems. The methods is applicable to linear and nonlinear integral equations. To elucidate the core idea, we give a complete and extensive analysis of the simplest case and derive worst-case estimates for the performance of parsimonious CQ based on the implicit Euler method.
For sectorial Laplace transforms, we obtain methods that require $O(\log^2 N)$ Laplace domain evaluations on the complex right-half space. 
We present different implementation strategies, which only differ slightly from the classical realization of CQ methods. Numerical experiments demonstrate the use of the method with a time-dependent acoustic scattering problem, which was discretized by the boundary element method in space.
\end{abstract}
%--------------------------
\section{Introduction}
\label{sec:intro}
%--------------------------
Convolution quadrature (CQ) methods are a collection of numerical schemes to approximate 
\begin{align*}
u(t) = \int_0^t k(t-t')g(t') \mathrm d t' \, , \quad 0 \le t\le T.
\end{align*}
An overview collecting the main results and applications of convolution quadrature methods is found in the recent book \cite{banjai-sayas22}. Both in the analysis and the numerical realization, the methodology does not rely on the time-domain kernel $k$, but rather on its Laplace transform $K = \mathcal L k$. In many applications, this Laplace domain function is more accessible, both for the theory and the numerics. The effectiveness of this approach has been demonstrated for a wide range of applications, 
in particular in the context of time-domain boundary integral equations for wave propagation problems \cite{BLS17,BL19,DES21,MR23,BF24}.

%Waves take a key role in many physical applications, which motivates the development and analysis of accurate and efficient numerical methods. 
The simulation of waves posed on unbounded domain, e.g., in the context of acoustic or electromagnetic scattering, are naturally approached by time-domain boundary integral equations, which are therefore an attractive foundation for computational methods. 
Following the original work on convolution quadrature for hyperbolic problems in \cite{L94}, many contributions towards the acceleration of these methods have been made, such as computationally efficient formulations which decouple and reduce the number of necessary Laplace transform evaluations \cite{BS09,B10}, variable time stepping \cite{LS13,LS15,LS16}, and the combination with compression techniques \cite{HKS07,HKS09,BK14}.

In their original formulation, the convolution quadrature method requires $O(N)$ evaluations in the Laplace domain, where $N$ is the number of temporal samples or number of time steps. 
For sectorial Laplace transforms, the fast and oblivious convolution quadrature introduced in \cite{SLL06} significantly reduces the number of necessary Laplace transform evaluations to $O(\log N)$. For Laplace transforms that are only analytic on a positive half-space, which arise for example in the context of acoustic and electromagnetic scattering problems, such a technique remained elusive over the last two decades. For a class of dissipative wave equations, techniques based on oblivious quadrature have been developed in \cite{BLS17}, although generally still a linear dependence on the number of timesteps $N$ used and the number of necessary evaluations in the Laplace domain remains.
%by the discrete convolution quadrature approximation with a step size $\tau>0$,
%\begin{align*}
%\sum_{j=0}^N w_{n-j} g(j\tau), \quad n=1,\dots,N,
%\end{align*}
%where the convolution quadrature weights $w_j$ are the coefficients of the power series 
%\begin{align*}
% \sum_{n=0}^\infty w_n \zeta^n = K\left(\dfrac{\delta(\zeta)}{\tau}\right).
%\end{align*}
%The possibly matrix-valued $K(s)$ is the Laplace transform of the temporal convolution kernel $k(t)$.
In many settings, in particular in the context of retarded boundary integral equations, 
the evaluation of the kernel in the Laplace domain is the computational bottleneck. This is the starting point of the present work. 
\subsection*{Outline and contributions}
We give an overview of the paper. The next section introduces the mathematical setting of temporal convolution operators and their discretization, which is a general framework that includes acoustic and electromagnetic scattering problems. Section~\ref{sect:parsimonious} introduces the technique in a general setting and collects interpolation error estimates for analytic functions from the literature. 
In Section~\ref{sec:euler}, we construct a specific method for the case of the implicit Euler method, which provably preserves the convergence rate of the original convolution quadrature method. Section~\ref{sect:higher-order} describes the extension to higher-order methods and shows that the methodology applies to CQ based on backward differential formula of order $2$ with asymptotically the same properties.  The theoretical part closes with Section~\ref{sect:sectorial}, which covers the case where $K(s)$ can be extended to a sector in the left complex half plane. Section~\ref{sect:numerics} closes the paper with some considerations with respect to the efficient implementation, as well as numerical experiments.

In the present work we provide the first mathematical evidence that, for Laplace transforms that are analytic only in a half-space, the number of Laplace transform evaluations in CQ can be sublinear, i.e., $O(N^\delta)$ for some $\delta < 1$ depending on the method. In view of the typical case of $K(s)$ being an approximation to an operator (e.g., by the boundary element method), we refer to the application $\varphi \rightarrow K(s)\varphi$ as a matrix--vector product. For methods based on the implicit Euler, or the backward differential formula of order $p=2$, the asymptotic properties of the method used to compute the convolution quadrature approximation with $N$ time steps can then be summarized as follows. 
\begin{itemize}
	\item  $O(\sqrt{N}\log N)$ evaluations of $K(s)$ have to be computed. 
	\item  In order to compute the approximation of $u(t_n)$ at a single time $t_n$, we require  $O(\sqrt{N}\log N)$ matrix--vector products.
	\item  To compute $u(t_n)$ at all discrete times $t_n = \tau n $ for $n=1,\dots,N$, we require $O(\sqrt{N}\log^2 N)$ matrix--vector products.
	\item  In the case of marching-on-in-time schemes, $O(\sqrt{N}\log N)$ evaluations in the Laplace domain have to be computed and stored, and $O(\sqrt{N}\log^3 N)$ matrix--vector products have to be computed.
\end{itemize}
With these limitations in mind, we believe the method to be particularly useful in the context of data with low regularity (as investigated in, e.g., \cite{BS12}) and for waves with dispersive material laws (see \cite{DES21,NBL23}) that include some dissipation, as is the case for metamaterials based on plasmonic resonators.

%-------------------------------
\section{Mathematical background}
\label{sect:setting}
%-------------------------------

The basic notation and results introduced in this section present the setting of \cite{L94}. More background can be found in \cite[Chapter~2]{banjai-sayas22}.
We start by introducing the setting of time-dependent convolutional operators and temporal Sobolev spaces.
Let $K(s)\colon X \rightarrow Y$ be an analytic family of bounded linear operators, defined for all $s$ in a complex positive half space
$\Re s > \sigma_0> 0$. The spaces $X$ and $Y$ denote arbitrary complex-valued Banach spaces.

The analytic operator family $K(s)$ is assumed to be polynomially bounded (with respect to the frequency $s$) for all $\Re s \ge 0$ in the following sense. 
There exist $\mu \in {\mathbb R}$ and  $M_0<\infty$ such that
\begin{align}\label{Z-bound}
\norm{K(s)}_{Y\leftarrow X}&\leq M_0 \abs{s}^\mu\quad  \text{ for \ Re } s \ge 0.
\end{align}
We note that this setting is a simplified version of the typical assumptions (see, e.g., \cite[Sec.~2.1]{L94}). This simplification is made for the sake of presentation, but does not restrict the proposed methodology. Implications of this simplification are discussed in Remark~\ref{rem:simpl-K} below. 
%\LB{where the constant $M_{\sigma}$ is allowed to grow at most polynomially as $\sigma \rightarrow 0$.} 
Under this condition, $\K(s)$ is guaranteed to be the Laplace transform of a distribution of finite order of differentiation with support on the nonnegative real half-line $t \ge 0$. 
Hence, for a sufficiently regular function $\bg:\mathbb R\to \X$ that vanishes on the negative real half-axis, we use the Heaviside notation of operational calculus, 
which writes 
\begin{equation}\label{Heaviside}
\K(\pt)\bg := (\mathcal{L}^{-1}\K) * \bg
\end{equation}
for the convolution of the inverse Laplace transform of $\K(s)$ with $\bg$. 
%The polynomial bound \eqref{Z-bound} ensures that the first component of this convolution is a distribution of finite order of differentiation, thus making this expression well-defined for sufficiently regular $g$.
This notation defines a wide class of temporal differential operators and is motivated by the fact that for $\Id(s)=s$, we have $\Id(\pt)g=\pt g$, which is the time derivative of $g$.

%The composition of two time-dependent operators defined via the Heaviside notation carries over to the Laplace domain. Specifically, for two operator families $\K(s) \colon \X\rightarrow Y$ and $\bL(s) \colon Y \rightarrow  Z$ with Hilbert spaces $\X$, $Y$, $Z$, 
%we have the composition rule 
%\begin{align}\label{eq:Heaviside-associative}
%\K(\pt)\bL(\pt)\bg = \left(\K \bL \right)(\pt) \bg,
%\end{align}
%if $\bg:[0,T]\rightarrow X $ again vanishes on the negative real half-line and is sufficiently regular.

Operators defined by the notation of operational calculus require a temporal functional analytic framework, which is provided by temporal Sobolev spaces.
Consider the space $\H^r(\R,\X)$ with real order $r$, the Sobolev space of order $r$ of $\X$-valued functions on $\R$, 
\cite[Chap.~1]{lions-magenes-I}.
Restricting the Hilbert space to functions that vanish on the negative real axis yields the definition (details on the definition are found in, e.g., \cite[Section~2.1]{banjai-sayas22})
$$
\H_0^r(0,T;\X) := \{\bg|_{(0,T)} \,:\, \bg \in \H^r(\R,\X)\ \text{ with }\ \bg = 0 \ \text{ on }\ (-\infty,0)\} . 
$$
%For the integer order $r\ge 0$, the norm $\| \pt^r \bg \|_{\bL^2(0,T;\X)}$ is equivalent to the original norm on $\H^r_0(0,T;\X)$. For $r>1/2$, 
%we have the continuous embedding ${\H^r_0(0,T;\X)\subset \bC([0,T];\X)}$. 

The following identity from \cite[Lemma~{2.1}]{L94} shows that the temporal convolutions defined by the Heaviside notation extends to bounded operators on appropriate temporal Sobolev spaces. 
Let the analytic family $\K(s)$ satisfy the polynomial bound \eqref{Z-bound} in the half-plane $\text{Re }s > 0$.
Under this condition, $K(\pt)$ (defined by \eqref{Heaviside} for regular functions $g$) extends by density to a bounded linear operator 
\begin{equation}\label{sobolev-bound}
	\K(\pt) : \H^{r+\mu}_0(0,T;\X) \to \H^r_0(0,T;\Y)
\end{equation}
for arbitrary real $r$. The framework of temporal Sobolev spaces therefore provides an appropriate setting for the Heaviside notation of operational calculus. 
\begin{remark}[On the assumptions on $K$]\label{rem:simpl-K}
In many settings, e.g.,  for acoustic or electromagnetic scattering problems, we are interested in $K(s)$ that only fulfill the bound \eqref{Z-bound} 
on half spaces $\Re s \ge \sigma>0$ for some arbitrary $\sigma>0$. A formulation that fits into the present setting \eqref{Z-bound} is then constructed by using the shifted operator $\widetilde K(s) = K(s+\sigma)$. Then, we find that $\widetilde K(\partial_t) g =e^{-\sigma t} K(\partial_t)g $. Using $\sigma=1/T$ with the present setting then generalizes the present theory this case and avoids an exponential dependence on the terminal time $T$. 
\eremk
\end{remark}
%Moreover, choosing $\sigma = 1/T$ in the Plancherel formula (see \cite[Lemma 2.1]{L94}), shows that the norm of the operator fulfills the bound
%\begin{equation*}
%	\norm{\K(\pt)}_{\H^r_0(0,T;\Y)\leftarrow\H^{r+\mu}_0(0,T;\X)} \le eM_{1/T},
%\end{equation*}
%where $M_{1/T}$ is the constant from \eqref{Z-bound}.

%----------------------
\subsection{The convolution quadrature method}
\label{ap:cq}
%----------------------
%\JN{Wollen wir diese Sektion mit der vorheringen verschmelzen zu "Background" oder "Mathematical setting of the convolution quadrature method"?}
The convolution quadrature method gives, based on an underlying A-stable time stepping scheme with step size $\tau>0$, 
%\begin{equation*}
%\bigl(\K(\pttau) g \bigr)_n  \approx 
%\bigl(\K(\pt) g \bigr)(n \tau), \qquad n=0,\ldots, N
%\end{equation*}
an approximation of the temporal convolutions $\K(\partial_t)$ on the time interval $[0,T] = [0, \tau N ]$. 
The time stepping scheme determines the so-called generating function $\delta$, which can be expanded in a power series
\begin{equation*}
\delta(\zeta) = \sum_{j=0}^\infty \delta_j \zeta^j.
\end{equation*}
%The implicit Euler method is described by the generating function
%\begin{equation*}
%	\delta(\zeta) = 1-\zeta
%\end{equation*}
Details are found in the original works \cite{L88,L94} and the recent book \cite{banjai-sayas22}. The generating functions of the first order (implicit Euler method) 
or second backward difference formula (BDF2) are given by $\delta(\zeta)=(1-\zeta)$ and $\delta(\zeta)=(1-\zeta)+\frac{1}{2}(1-\zeta)^2$, respectively. 
For multistage methods these generating functions are matrix-valued. Although Runge--Kutta based methods typically outperform their multistep counterparts in the context of convolution quadrature methods (see, e.g., \cite{B10}), we will mostly discuss multistep based methods in this paper due to their simplicity.

%As in Section~\ref{subsec:Z}, this yields a convolution operator $\cqK(\pt):H^{r+\kappa}_0(0,T;\cqX) \to \cqH^{r}_0(0,T;\cqY)$  for arbitrary real $r$. For functions $\cqg:[0,T]\to \cqX$ that are sufficiently regular (together with their extension by 0 to the negative real half-axis $t<0$),
%we wish to approximate the convolution $(\cqK(\pt)\cqg)(t)$ at discrete times $t_n=n\tau$ with a step size $\tau>0$, using a discrete convolution.

For a time step $\tau > 0$, the convolution quadrature weights are defined as the coefficients of the power series 
\begin{equation}\label{rkcq-weights}
\K\Bigl(\frac{\delta(\zeta)}\tau \Bigr) = \sum_{n=0}^\infty {W}_n(\K) \zeta^n.
\end{equation}
The operators ${W}_n(\K):\X \to Y$ are the quadrature weights of the convolution, namely, for $g=(g^n)_{n\in {\mathbb N}_0}$, we say that $K(\partial_t^\tau)g$ is the convolution quadrature approximation to the temporal convolution \eqref{Heaviside} (with an appropriate right-hand side), which is defined by the discrete convolution
\begin{equation}\label{rkcq}
\bigl(\K(\pttau) g \bigr)_n = \sum_{j=0}^n {W}_{n-j}(\K) g^j.
\end{equation}
This procedure thus leads to the approximation 
\begin{equation*}
\bigl(\K(\pttau) g \bigr)_n  \approx 
\bigl(\K(\pt) g \bigr)(n \tau), \qquad n=0,\ldots, N, 
\end{equation*}
at the discrete time points $t_n = n \tau$,  where the sequence $(g^n)_{n \in {\mathbb N}_0} $ is obtained by sampling $g$: 
$g^n = g(t_n)$. 
%A key property is that the continuous composition rule~\eqref{eq:Heaviside-associative} is preserved under this discretization: for two such operator families $\K(s)$ and $\bL(s)$ that map to compatible spaces, we have
%\begin{equation}\label{comp-rule-tau}
%\K(\pttau)\bL(\pttau)\bg = (\K\bL)(\pttau)\bg.
%\end{equation}

%----------------------
\subsection{Standard assembly of the quadrature weights}
%----------------------
To approximate the quadrature weights, we use the Cauchy integral formula, which reads for $\lambda <1$ 
\begin{align}
\label{eq:CQ-weights}
\cqW_n(\cqK) = \frac{1}{2\pi i} \int_{\abs{\zeta} = \lambda} \cqK(\delta(\zeta)/\tau)\zeta^{-n-1} \mathrm d \zeta
= \dfrac{1}{2\pi\lambda^{n}}\int_0^{2\pi}e^{-i n\varphi}\cqK(\delta(\lambda e^{i\varphi})/\tau)) \,\mathrm d \varphi.
\end{align}
The established method to approximate these integrals effectively is to employ the trapezoidal rule with $L\ge N$: 
\begin{align}
\cqW_n(\cqK)&\approx \cqW^\lambda_n(\cqK):= \dfrac{\lambda^{-n}}{L}\sum_{l=0}^{L-1}\cqK\left(\dfrac{ \delta(\lambda\, \zeta_L^{-l})}{\tau}\right)\zeta_L^{nl}, \quad \text{for } 0\le n \le N,
\end{align}
where $\zeta_L=  e^{2\pi i  /L}$.
Inserting these approximations of the quadrature weights into \eqref{rkcq} and using that $\cqW_j(\cqK) = 0$ for $j < 0$ gives the standard convolution quadature scheme
\begin{align}\label{eq:forward-cq}
\begin{aligned}
\left(\cqK(\pt^\tau)\cqg\right)_n  = 
\sum_{j=0}^N \cqW_{n-j}(\cqK) g^j 
&\approx
\sum_{j=0}^N \cqW^\lambda_{n-j}(\cqK) g^j 
\\&= 
\dfrac{\lambda^{-n}}{L}\sum_{l=0}^{L-1}\zeta_L^{ln} \cqK\left(\dfrac{\delta(\lambda \,\zeta^{-l}_{L})}{\tau} \right) \left[ \sum_{j=0}^N \lambda^j \cqg^j \zeta^{-jl}_{L} \right] =:\left(\cqK(\pt^{\tau,\lambda})\cqg)\right)_n. 
\end{aligned}
\end{align}
Here, the parameter $\lambda \in (0,1)$ should be chosen to balance the effect of various errors, in particular 
\begin{enumerate*}[label=(\alph*)]
\item 
the effects of finite precision arithmetic;  
\item
the quadrature error of the trapezoidal rule; 
\item 
errors introduced by further approximating $K(\delta(\lambda\zeta^{-l}_L)/{\tau})$ (see  
(\ref{eq:forward-cq-interpolation}) for the final form of the proposed parsimonious CQ).
\end{enumerate*}
The first two issues lead to a rather standard choice (see \cite{L94}) of the parameter $\lambda$ as 
$\lambda = \epsm^{1/(2N)},$
where $\epsm >0$ 
the machine precision. For kernels $K$ satisfying (\ref{Z-bound}), this choice ensures that roundoff errors are 
not larger than $O(\sqrt{\epsm})$, see, e.g., \cite[Sec.~{3.4}]{banjai-sayas22}. Furthermore, the use of the exponentially convergent 
trapezoidal rule entails an additional error (see, e.g., \cite[Lemma~{3.2}]{banjai-sayas22}) of size $O(\lambda^{N})$ for the choice 
$L = N$, thus again of size $O(\sqrt{\epsm}))$. We mention in passing that the achievable error can be lowered by larger choices of $L$ as discussed in 
\cite[Sec.~{3.4}]{banjai-sayas22}. The third issues arises from a kernel approximation that we propose in the present work and is discussed in the following 
Section~\ref{sec:kernel-approximation}. It will lead us to selecting $\lambda = \epsilon^{1/(2N)}$ with $\epsilon \ge \epsm$ 
such that the convergence rate of the standard convolution quadrature is retained (see  (\ref{eq:parameter-choice}) ahead).

%----------------------
\section{Parsimonious CQ by piecewise polynomial kernel approximation} \label{sect:parsimonious}
%----------------------
%----------------------
\subsection{Kernel approximation by piecewise polynomials}
\label{sec:kernel-approximation}
%----------------------
The aim of this work is to derive schemes that require only a sublinear (with respect to $N$) number of evaluations of the Laplace transform. 
To that end,  we replace the time-harmonic evaluations of $K$ in \eqref{eq:forward-cq} by a piecewise Chebyshev interpolant. On 
the reference interval $[-1,1]$, the Chebyshev interpolation operator $I^{[-1,1]}_p: C([-1,1]) \rightarrow {\mathcal P}_p$, mapping into the space 
${\mathcal P}_p$ of polynomials of degree $p$ is given by interpolating in the $p+1$ Chebyshev points 
$\{x_j^{(p)} = \cos (\frac{2j+1}{2p+2} \pi)\,|\, j=0,\ldots,p\}$. For a general interval $[a,b] \subset {\mathbb R}$, the 
Chebyshev interpolation operator $I^{[a,b]}_p: C([a,b]) \rightarrow {\mathcal P}_p$ is derived from $I^{[-1,1]}_p$ by affinely transforming to $[-1,1]$.  

For $\Phi_{-1}:= 0 < \Phi_0 < \Phi_1 < \cdots < \Phi_M := \pi$, we set $I_j:= [\Phi_{j-1},\Phi_j]$ 
and $I_{j+M+1}:= 2\pi - I_{M-j}$ for $j=0=1,\ldots,M$. 
On $[0,2\pi]$ we introduce the mesh ${\mathcal T}:= \{I_j\,|\, j=0,\ldots,2M+1\}$, which is symmetric with respect to $\pi$. 
We let the operator
$I^{\mathcal T}_p$ be defined as the elementwise degree-$p$ Chebyshev interpolation operator for $j=1,\ldots,2 M$ and as the identity operator 
$I^\Phi_p = \operatorname{I}$ for $j \in\{0,2M+1\}$, i.e., 
\begin{equation*}
(I^{\mathcal T}_p f)|_{I_0} = f, 
\qquad 
(I^{\mathcal T}_p f)|_{I_{2M+1}} = f, 
\qquad 
(I^{\mathcal T}_p f)|_{I_j} = I^{I_j}_p (f|_{I_j}), \quad j =1,\ldots,2M. 
\end{equation*}
This yields our final approximation, the parsimonious CQ:  
\begin{align}\label{eq:forward-cq-interpolation}
\left(\cqK(\pt^{\tau,\lambda})\cqg\right)_n \approx \left(I^{\mathcal T}_p \cqK(\pt^{\tau,\lambda})\cqg\right)_n := 
\dfrac{\lambda^{-n}}{L}\sum_{l=0}^{L-1}\zeta_L^{ln}  I^{\mathcal T}_{p}\cqK\left(\dfrac{\delta(\lambda \,\zeta^{-l}_{L})}{\tau} \right) \left[ \sum_{j=0}^N \lambda^j \cqg^j \zeta^{-jl}_{L} \right].
\end{align}
%The interpolation operator $I^\Phi_p$ is defined as the piecewise Chebyshev interpolation on the intervals
%\begin{align}
%I_j = [\Phi_{j-1},\Phi_{j}],
%\end{align}
%which form a partition of the interval
%\begin{align*}
%	\cup_{j=1}^{M} I_j = [\Phi_0,\pi].
%\end{align*}
%The break points ${0<\Phi_0<\Phi_1<\Phi_2<\dots<\Phi_M=\pi}$ are monotonically increasing. \JMM{was ist eigentlich mit $[\pi,2\pi]$?} 
%In the initial interval $[0,\Phi_0]$, no interpolation is employed. 
\begin{remark} 
The mesh ${\mathcal T}$ is symmetric with respect to the point $\pi$. This is not essential for the algorithm and its analysis and merely done to exploit 
algorithmically potential symmetries of $K$: If $K(\overline{s}) = \overline{K(s)}$, as is often the case, then (see \cite[Section~4.1]{BS09} for more details)
\begin{align*}
K\left( \dfrac{\delta( \lambda e^{-i\varphi }) }{\tau}\right)
	=K\left( \dfrac{\overline{\delta( \lambda e^{i\varphi }) }}{\tau}\right)	=	\overline{ K\left( \dfrac{\delta( \lambda e^{i\varphi }) }{\tau}\right)}, 
\end{align*}
which allows one to halve the number of applications of the operator $K$ in  
(\ref{eq:forward-cq-interpolation}). 
\eremk
\end{remark}
Restricted to an interval, the interpolation operator reads
\begin{align*}
  I^{\mathcal T}_p K\left( \dfrac{\delta( \lambda e^{i\varphi }) }{\tau}\right)\Bigg|_{[\Phi_{j-1},\Phi_j ]}
  = \sum_{j=0}^p
  T_j(\varphi) C_j .
\end{align*}
 The operator-valued Chebyshev coefficients $C_j$ are uniquely determined by evaluations of the left-hand side at the zeros of the $p$-th order Chebyshev polynomial transformed to the interval $[\Phi_{j-1},\Phi_j ]$.
\begin{remark}
The approach described here resembles Filon quadrature, which would replace the Laplace domain operator $K$ by its piecewise Chebyshev interpolation $I^{\mathcal T}_p$ in \eqref{eq:CQ-weights} and then compute the integral exactly. Here, we purposefully avoid this step and leave the numerical integration of the composite trapezoidal rule in place in order to preserve the structure of \eqref{eq:forward-cq}, where both summations can be implemented via the fast fourier transform (FFT). By relying on these techniques, we are able to give an efficient method to compute the approximation \eqref{eq:compute-all-times} at all times at once with $O(Mp^2)$ evaluations. If only a single time $t$ is of interest, then a Filon-type quadrature might be an interesting alternative. Details on the mathematical analysis and implementation of Filon quadrature are found in \cite{IS05} and an extensive number of subsequent papers.
\eremk
\end{remark}
%------------------------------
\subsection{The domain of analyticity in the half space case} 
%------------------------------
In order to estimate the interpolation error, we need to understand the domain of analyticity of the analytic extension of the scalar function 
\begin{align}\label{def:f - varphi}
    f(\varphi) = K\left( \dfrac{\delta( \lambda e^{i\varphi }) }{\tau}\right) \, \colon \, D^{\delta} \rightarrow \mathbb C .
\end{align}
More generally, the above expression describes an analytic family of operators, since $K(s)\,\colon \, X\rightarrow Y$. For the readability of the next sections, we restrict our attention to scalar transfer functions, i.e.,  $K(s)\in \mathbb C$. The generalization to general $K(s):X\rightarrow Y$ for Hilbert spaces $X$ and $Y$ is straightforward.

Let $D^\delta\subset \mathbb C$ be the domain of analyticity of $f$, i.e., all $\varphi \in \mathbb C$ such that the argument 
$\delta(\lambda e^{i \varphi})/\tau$ of the Laplace domain operator $K$ has positive real part:
\begin{align}\label{eq:cond-pos-Real}
	D^\delta := \{\varphi \in \mathbb C\colon \Re \delta(\lambda e^{i\varphi }) > 0\}.
\end{align}
Crucially, we are interested in the size of the largest Bernstein ellipse around the intervals $[\Phi_{j-1},\Phi_j ]$ that is contained in the domain of analyticity. 
We define 
\begin{align}
\label{eq:Drho}
D_\rho(-1,1) &:= \left\{z \in \mathbb C\,\colon \, |z - 1 | + |z +1| < \rho + 1/\rho\right\},  \\
\label{eq:Erho}
E_{\rho}(-1,1)&:= \partial D_\rho(-1,1) = 
	 \left\{z = \dfrac{\rho e^{i\theta} + \rho^{-1}e^{-i\theta}}{2} \, \colon \, \theta\in [0,2\pi] \right\}. 
\end{align}
%The boundary of the Bernstein ellipse for the interval $[-1,1]$ is given by 
%\begin{align*}
%	E_\rho(-1,1)=  \left\{z = \dfrac{\rho e^{i\theta} + \rho^{-1}e^{-i\theta}}{2} \, \colon \, \theta\in [0,2\pi] \right\}. 
%\end{align*}
Generalizing this parametrization to arbitrary intervals $[\phi_0, \phi_1]$ by the linear transformation $\psi \, \colon \, [-1,1] \rightarrow [\phi_0, \phi_1]$, 
$x \mapsto \phi_0 +(\phi_1 -\phi_0)\frac{x+1}{2}$ yields 
\begin{align}\label{def:Bernstein-general-I}
E_\rho(\phi_0,\phi_1) 
:=
\left\{
z = \phi_0 +\frac{1}{2}(\phi_1 - \phi_0)\left(\dfrac{\rho e^{i\theta} + \rho^{-1}e^{-i\theta}}{2} +1\right) \, \colon \, \theta\in [0,2\pi]
\right\} .
\end{align}
With these ellipses, we are in position to formulate a polynomial approximation result for Chebyshev interpolation. 
%\begin{lemma}
%[\protect{\cite[Chap.~7, Thm.~{8.1}, eqn.~(8.7)]{DL93}}]
%\label{lemma:polynomial-approximation}
%%Let $D_\rho(-1,1)$ denote the Bernstein ellipsoid $E_\rho (-1,1)$ and its interior for some $\rho>1$. 
%Let $f\,:\, D_\rho(-1,1) \rightarrow \mathbb C $ be holomorphic.  
%The error of the truncated Chebyshev expansion $I_p f \in {\mathcal P}_p$ on the interval $[-1,1]$ is bounded by 
%\begin{align*}
%	\norm{f-I_p f}_{L^{\infty}(-1,1)} \le \dfrac{2}{\rho-1}\rho^{-p}\norm{f}_{L^{\infty}(D_\rho(-1,1))}.
%\end{align*}
% Here, ${\mathcal P}_p$ denotes the space of polynomials of degree $p$. If $f|_{(-1,1)}$ is real-valued, then $I_p f$ is likewise
%real-valued on $(-1,1)$. 
% 
%\end{lemma}
\begin{lemma}
%[\protect{\cite[Chap.~7, Thm.~{8.1}, eqn.~(8.7)]{DL93}}]
\label{lemma:polynomial-approximation}
%Let $D_\rho(-1,1)$ denote the Bernstein ellipsoid $E_\rho (-1,1)$ and its interior for some $\rho>1$. 
Let $f\,:\, D_\rho(-1,1) \rightarrow \mathbb C $ be holomorphic.  
The error of the Chebyshev interpolant $I^{[-1,1]}_p f \in {\mathcal P}_p$ on the interval $[-1,1]$ is bounded by 
\begin{align*}
	\norm{f-I^{[-1,1]}_p f}_{L^{\infty}(-1,1)} \le 2^{3/2}\sqrt{1-\rho^{-2}} \rho^{-p}\norm{f}_{L^{\infty}(D_\rho(-1,1))}.
\end{align*}
 If $f|_{(-1,1)}$ is real-valued, then $I_p f$ is likewise real-valued on $(-1,1)$. 
\end{lemma}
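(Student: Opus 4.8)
The plan is to pass from interpolation to the Chebyshev series of $f$ and to exploit that holomorphy on $D_\rho(-1,1)$ forces geometric decay of the Chebyshev coefficients. First I would use the Joukowski map $x=\frac12(u+u^{-1})$, under which the boundary ellipse $E_\rho(-1,1)$ is the image of the circle $\abs{u}=\rho$ and $f$ lifts to a function $g(u)=f(\tfrac12(u+u^{-1}))$ that is holomorphic and symmetric ($g(u)=g(u^{-1})$) on the annulus $\rho^{-1}<\abs{u}<\rho$. The Chebyshev expansion $f=\sum_{k\ge0}a_kT_k$ on $[-1,1]$ then corresponds to the symmetric Laurent series $g(u)=\sum_k c_ku^k$ (with $a_k=2c_k$ for $k\ge1$ and $a_0=c_0$), and writing $a_k$ as a contour integral over $\abs{u}=r$ with $1<r<\rho$ and letting $r\to\rho$ yields the standard coefficient bound $\abs{a_k}\le 2\rho^{-k}\norm{f}_{L^{\infty}(D_\rho(-1,1))}$ for $k\ge1$. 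In the same step Parseval on the circle $\abs{u}=r$ gives the weighted bound $\sum_{k\ge1}\abs{c_k}^2\rho^{2k}\le\norm{f}_{L^{\infty}(D_\rho(-1,1))}^2$, which I will need for the sharp constant.

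Next I would convert these coefficient bounds into the interpolation estimate. The key identity is that Chebyshev interpolation reproduces polynomials of degree $\le p$, so with $f_p:=\sum_{k=0}^p a_kT_k$ one has $f-I^{[-1,1]}_pf=(\operatorname{I}-I^{[-1,1]}_p)(f-f_p)$, reducing everything to the tail $\sum_{k>p}a_kT_k$. The interpolant of the high modes is governed by the aliasing relation for the Chebyshev nodes: each $T_k$ with $k>p$ is interpolated to $\pm T_{m(k)}$ with $m(k)\le p$, so that $\norm{T_k-I^{[-1,1]}_pT_k}_{L^{\infty}(-1,1)}\le 2$. To obtain the square-root dependence on $1-\rho^{-2}$ I would estimate the tail $\sum_{k>p}\abs{c_k}$ by Cauchy--Schwarz against the Parseval bound above, rather than by the cruder termwise ($\ell^1$) sum; the closed form $\sum_{k>p}\rho^{-2k}=\rho^{-2(p+1)}/(1-\rho^{-2})$ is exactly where the factor $1-\rho^{-2}$ enters, and collecting constants produces the claimed bound of the form $C(\rho)\,\rho^{-p}\norm{f}_{L^{\infty}(D_\rho(-1,1))}$.

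The main obstacle I anticipate is the exact bookkeeping of the constant $2^{3/2}(1-\rho^{-2})^{1/2}$ (equivalently, the factor coming from $\sum_{k>p}\rho^{-2k}$): the factor $2$ from the Joukowski symmetry $a_k=2c_k$, the factor $2$ from aliasing, and the $\sqrt{\cdot}$ from the Cauchy--Schwarz step must be combined without losing any $\rho$-independent slack, and one must verify which power of $(1-\rho^{-2})$ survives after dividing by the leading geometric factor. A cleaner alternative that avoids aliasing altogether is to bound the interpolation error directly by the truncation error of the Chebyshev projection onto $\mathcal P_p$ — the two differ only by a fixed constant — which isolates the geometric-series estimate as the sole quantitative step and makes the constant easier to control.

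Finally, the real-valuedness assertion is immediate and I would dispatch it separately: the Chebyshev nodes $x_j^{(p)}$ are real and the associated Lagrange basis polynomials have real coefficients, so if the data $f(x_j^{(p)})$ are real, then $I^{[-1,1]}_pf$ is a real-coefficient polynomial and hence real-valued on $(-1,1)$.
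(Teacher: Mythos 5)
The paper does not actually prove this lemma: it cites \cite[Lemma~7.3.3]{sauter-schwab11} and mentions, as an alternative ``with a possibly different constant,'' the route through the relation between Chebyshev expansion and interpolation coefficients \cite[Thm.~4.2]{trefethen13}. Your proposal is essentially an execution of that alternative route, and its mechanics are sound for the first-kind nodes used here: the Joukowski lift gives $|a_k|\le 2\rho^{-k}\norm{f}_{L^\infty(D_\rho(-1,1))}$ and the Parseval bound $\sum_{k\ge 1}|c_k|^2\rho^{2k}\le \norm{f}_{L^\infty(D_\rho(-1,1))}^2$; polynomial reproduction reduces to the tail; and since the nodes are the zeros of $T_{p+1}$, each $T_k$ with $k>p$ aliases to $\pm T_{m(k)}$ with $m(k)\le p$ (or to $0$), so $\norm{T_k-I^{[-1,1]}_pT_k}_{L^\infty(-1,1)}\le 2$. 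One caution on a side remark: your ``cleaner alternative'' claim that interpolation and projection errors ``differ only by a fixed constant'' is false operatorwise — the naive bound $f-I_pf=(\operatorname{I}-I_p)(f-f_p)$ costs $1+\Lambda_p$ with Lebesgue constant $\Lambda_p\sim\log p$; it is precisely the aliasing argument you sketch first that keeps the constant absolute, so you should retain it.

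The genuine issue is the constant you flagged but left unresolved: Cauchy--Schwarz places the factor in the \emph{denominator}. Your chain gives $\sum_{k>p}|c_k|\le \norm{f}_{L^\infty(D_\rho(-1,1))}\,\rho^{-(p+1)}(1-\rho^{-2})^{-1/2}$ and hence $\norm{f-I^{[-1,1]}_pf}_{L^\infty(-1,1)}\le 4\rho^{-p}(\rho^2-1)^{-1/2}\norm{f}_{L^\infty(D_\rho(-1,1))}$, not the printed bound with $\sqrt{1-\rho^{-2}}$ in the \emph{numerator} — and no argument can yield the printed form uniformly in $\rho>1$: for $f=T_{p+1}$ (entire, with $\norm{T_{p+1}}_{L^\infty(D_\rho(-1,1))}=\cosh((p+1)\log\rho)\to 1$ as $\rho\to 1^+$) one has $I^{[-1,1]}_pT_{p+1}=0$, so the error equals $1$ while the printed right-hand side tends to $0$; already at $\rho=1.1$, $p=4$ it is $\approx 0.9<1$. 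So the numerator placement is evidently a transcription slip relative to the cited source, and your bound is the correct shape. For $\rho\ge(\sqrt{2}+\sqrt{6})/2\approx 1.93$ — in particular for $\rho=2$, the only value the paper uses — your bound implies (indeed marginally improves on) the printed one ($4/\sqrt{3}\approx 2.31$ versus $\sqrt{6}\approx 2.45$ at $\rho=2$), and all downstream uses (Theorem~\ref{thm:euler}, \eqref{eq:chebyshev-interpolation-error-scaled}) absorb the constant anyway. The real-valuedness argument via real nodes and real Lagrange basis is correct.
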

\begin{proof}
See \cite[Lemma~{7.3.3}]{sauter-schwab11}. An alternative proof (with a possibly different constant) can be inferred from the explicit relation
between the expansion coefficients of the Chebyshev expansion of $f$ and the coefficients of the interpolant \cite[Thm.~{4.2}]{trefethen13}. 
\end{proof}
Via pull-back, this result generalizes to arbitrary intervals: 
\begin{equation}
\label{eq:chebyshev-interpolation-error-scaled}
\|f - I^{I_j}_p f\|_{L^\infty(I_j)} \leq 2^{3/2} \sqrt{1-\rho^{-2}} \rho^{-p} \norm{f}_{L^\infty(E_\rho(\Phi_{j-1},\Phi_j))}, 
\qquad j=1,\ldots,2 M. 
\end{equation}
%-------------------------------
\section{Parsimonious CQ based on the implicit Euler method}
\label{sec:euler}
%-------------------------------
%-------------------------------
\subsection{A specific choice of mesh for the implicit Euler method}
\label{sec:implicit-euler}
%-------------------------------
In the following, we describe an explicit collection of intervals, for which the piecewise polynomial interpolation for \eqref{def:f - varphi} converges exponentially with an explicit, predetermined convergence rate. In order to simplify the expressions, we use the rate $\rho=2$ as an example, although the techniques generalize in a straightforward way to arbitrary $\rho>1$.
%
%To further simplify the presentation, we assume $\sigma_0=\sigma=0$ in \eqref{Z-bound}, i.e., 
%\begin{align}
%\label{eq:Mmu}
%\exists M_0, \mu \ge 0 \quad \colon \quad 
%	\norm{K(s)}_{Y\leftarrow X}&\leq M_{0} \abs{s}^\mu\quad  \text{ for \ Re } s >0.
%\end{align}
%As the initial value, we choose $\Phi_0 =\sqrt{\frac{10|\log(\epsilon)|}{6N} }$. Then, we compute the following terms $\Phi_j$ for $j>0$ by the recursion
Define $\Phi_j$, $j \in \mathbb N_0$,  recursively by 
\begin{subequations}
\label{eq:rec}
\begin{align}
\label{eq:rec-a}
\Phi_0 &:= \sqrt{\frac{10| \log (\epsilon)|}{6N}}, \\
\label{eq:rec-b}
	\Phi_{j+1} 
	&= 
%	\min\left(\Phi_j-\frac{8}{3}\log(\lambda) - \frac{8}{3}\log\left(\cos\left(\frac{7}{8}\Phi_j\right),\pi\right)\right) .
\begin{cases} \Phi_j + 
\min \left\{\Phi_j, -\frac{8}{3} \log(\lambda) - \frac{8}{3} \log\left(\cos\left(\frac{7}{8} \Phi_j\right)\right) \right\} \quad &\text{ if } \Phi_j < 1 \\
%\Phi_j-\frac{8}{3}\log(\lambda) - \frac{8}{3}\log\left(\cos\left(\frac{7}{8}\Phi_j\right)\right) \quad &\text{if } \Phi_{j} \le 1,\\
\pi & \text{else} .
\end{cases} 
\end{align}
\end{subequations}
The recursion terminates at the index
\begin{align}
\label{eq:M}
	M= \min \left\{j\in \mathbb N \, | \, \Phi_j = \pi \right\}.
\end{align}
We note that the sequence is strictly increasing, i.e. $\Phi_j<\Phi_{j+1}$, for $j\le M-1$. As described in Section~\ref{sec:kernel-approximation},
we consider the mesh ${\mathcal T}$ with $2M+2$ elements determined by the $\Phi_j$, $j=0,\ldots,M$. 
The intervals $[\Phi_j,\Phi_{j+1}]$ are constructed such that an efficient convolution quadrature scheme based on piecewise polynomial approximation of $f$
is possible: 
\begin{theorem}[Intervals for the implicit Euler method]
\label{thm:euler}
Let $K$ satisfy (\ref{Z-bound}). Let the mesh ${\mathcal T} = \{[\Phi_{j-1},\Phi_j]\,|\, j=0,\ldots,2M+1\}$ be given by 
(\ref{eq:rec}).  
Select $0 < \epsilon \ll 1$ and assume 
$0.08 \ge -\log \lambda  = -\frac{\log \epsilon}{2N}>0$ and $\log N\le |\log \epsilon |^{3/2} $ (both of these assumptions are fulfilled by all reasonable parameter sets of the convolution quadrature method for sufficiently large $N$).

%Consider the decomposition $[0,\pi] = \cup_{j=0}^{M-1} [\Phi_j,\Phi_{j+1}]$ defined by \eqref{eq:rec}
Then: 
\begin{enumerate}[label=(\roman*),leftmargin=18pt]
\item 
\label{item:thm:euler-i}
The Chebyshev interpolation of $K$ on any interval $[\Phi_j,\Phi_{j+1}]$ converges exponentially with the rate $\rho=2$, more precisely, 
\begin{align}
\label{eq:thm:euler-1}
\max_{j=0,\ldots,2M+1} 
\sup_{s \in [\Phi_{j-1},\Phi_j]} \left\|K(s) - (I^{[\Phi_{j-1},\Phi_j]}_p K)(s)  \right\|_{Y \leftarrow \X} \le \frac{C M_{0} }{\tau^{\mu}2^p}.
\end{align}
The constant $C$ is independent of $f$, $p$, $\tau$, $T$, and $\epsilon$. 
\item 
\label{item:thm:euler-ii}
The number $M$ of intervals in \eqref{eq:M} satisfies 
\begin{align}
	\label{eq:thm:euler-2}
	M \leq 3 + 3\sqrt{N}. 
\end{align}
%\begin{align}
%\label{eq:thm:euler-2}
%	M \leq 3 +  \frac{192}{49} \sqrt{\frac{6}{10} \frac{N}{|\log \epsilon|}} \log \left(\frac{6}{10} \frac{N}{|\log \epsilon|}\right). 
%\end{align}
\end{enumerate}
\end{theorem}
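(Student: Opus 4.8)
The statement couples two claims: an exponential interpolation bound with rate $\rho = 2$ uniformly over the intervals, and a count $M \le 3 + 3\sqrt{N}$ of the intervals produced by the recursion \eqref{eq:rec}. My plan is to establish both by controlling, on each interval $[\Phi_{j-1},\Phi_j]$, the Bernstein ellipse $E_\rho(\Phi_{j-1},\Phi_j)$ with $\rho = 2$ and showing it stays inside the analyticity domain $D^\delta$ of $f(\varphi) = K(\delta(\lambda e^{i\varphi})/\tau)$. For the implicit Euler case $\delta(\zeta) = 1-\zeta$, the condition $\Re\,\delta(\lambda e^{i\varphi}) > 0$ from \eqref{eq:cond-pos-Real} becomes an explicit inequality in the real and imaginary parts of $\varphi$; writing $\varphi = \phi + i\psi$, the real part $\Re(1 - \lambda e^{i\phi}e^{-\psi})$ must stay positive, which amounts to $\lambda e^{-\psi}\cos\phi < 1$. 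The first step is therefore to turn this into a quantitative statement: for a point on $E_2(\Phi_{j-1},\Phi_j)$, bound its imaginary excursion (governed by the semi-minor axis, $\tfrac12(\rho - \rho^{-1}) = \tfrac34$ of the half-length, so about $\tfrac34 \cdot \tfrac12(\Phi_j - \Phi_{j-1})$) and its real excursion, and verify $\Re\,\delta > 0$ there. The specific coefficients $\tfrac83$ and $\tfrac78$ appearing in \eqref{eq:rec-b} are exactly what is needed to guarantee this margin, so I expect the step-size rule to be reverse-engineered from this positivity requirement.

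For part \ref{item:thm:euler-i}, once the ellipse $E_2(\Phi_{j-1},\Phi_j)$ is contained in $D^\delta$, I would apply the scaled interpolation estimate \eqref{eq:chebyshev-interpolation-error-scaled} with $\rho = 2$, giving a factor $2^{-p}$ and a constant $2^{3/2}\sqrt{1 - 2^{-2}}$. The remaining task is to bound $\norm{f}_{L^\infty(E_2(\Phi_{j-1},\Phi_j))}$ uniformly in $j$. Here the polynomial bound \eqref{Z-bound}, $\norm{K(s)}_{Y\leftarrow X} \le M_0|s|^\mu$, enters: on the ellipse I must bound $|\delta(\lambda e^{i\varphi})/\tau|^\mu$, which requires both an upper bound on $|\delta|$ (straightforward, since $|\lambda e^{i\varphi}| \le \lambda e^{|\psi|}$ is controlled) and, when $\mu < 0$, a lower bound keeping $\delta$ away from $0$ — this is again guaranteed by the positivity margin. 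This produces the $\tau^{-\mu}$ factor and a $j$-independent constant, yielding \eqref{eq:thm:euler-1}. The two endpoint intervals $I_0$ and $I_{2M+1}$, where $I^{\mathcal T}_p$ is the identity, contribute zero interpolation error and need no argument.

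For part \ref{item:thm:euler-ii}, I would analyze the growth of $\Phi_j$ under \eqref{eq:rec-b}. As long as $\Phi_j < 1$, the increment is at least $\min\{\Phi_j, \text{(log term)}\}$, and the role of the initial condition \eqref{eq:rec-a}, $\Phi_0 = \sqrt{10|\log\epsilon|/(6N)}$, together with $-\log\lambda = -\log\epsilon/(2N)$, is to make the log term comparable to $\Phi_j$ in the regime of interest, so that the increment is effectively $\Phi_{j+1} \approx 2\Phi_j$ — geometric growth. Summing a geometric progression from $\Phi_0$ up to a value of order $1$ gives $j$ of order $\log(1/\Phi_0) = O(\log\sqrt{N}) = O(\log N)$ steps to escape the small-$\Phi$ regime; but the claimed bound is $O(\sqrt{N})$, which is far weaker, so the count must be dominated by the phase where the increment equals $\Phi_j$ itself (pure doubling) balanced against the $\sqrt N$ scale of $\Phi_0$. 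The cleanest route is to show the increment is bounded below by a fixed multiple of $\Phi_0$ on each step, so that reaching $\pi$ from $\Phi_0$ takes at most $O(\pi/\Phi_0) = O(\sqrt{N})$ steps, and then to track the constants carefully to land on $3 + 3\sqrt{N}$. \textbf{The main obstacle} I anticipate is precisely this constant-tracking: verifying that the two-case $\min$ in \eqref{eq:rec-b}, combined with the $\cos(\tfrac78\Phi_j)$ correction and the hypotheses $-\log\lambda \le 0.08$ and $\log N \le |\log\epsilon|^{3/2}$, yields an increment lower bound sharp enough to produce the explicit numerical constant $3\sqrt N$ rather than merely $O(\sqrt N)$, while simultaneously keeping the increment small enough that the positivity/analyticity margin of part \ref{item:thm:euler-i} is never violated.
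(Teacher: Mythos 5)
Your plan for part \ref{item:thm:euler-i} coincides with the paper's: verify the containment \eqref{eq:key-step-thm:euler} of the image of the Bernstein ellipse in the right half-plane (the coefficients $\tfrac{8}{3}$ and $\tfrac{7}{8}$ in \eqref{eq:rec-b} are indeed reverse-engineered from exactly this requirement, with $\rho=2$ giving the semi-axis factors $\tfrac{5}{8}$ and $\tfrac{3}{8}$ that appear in the appendix), then apply \eqref{eq:chebyshev-interpolation-error-scaled} together with \eqref{Z-bound}. Be aware that this containment is the technical heart of the proof: the paper needs a three-case analysis over $\Phi_j$ (small, in $[1/2,1]$, and $\ge 1$) with concavity estimates and monotonicity arguments in Appendix~\ref{sec:details-of-thm:euler}, so ``the coefficients are exactly what is needed'' is a correct expectation but not yet a proof.

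The genuine gap is in part \ref{item:thm:euler-ii}, where both quantitative mechanisms you propose are false in the regime that dominates the count. Since $-\tfrac{8}{3}\log\lambda = \tfrac{4}{5}\Phi_0^2$ by \eqref{eq:rec-a} and $-\tfrac{8}{3}\log\cos(\tfrac{7}{8}\Phi_j) = \tfrac{49}{48}\Phi_j^2 + O(\Phi_j^4)$, the increment in \eqref{eq:rec-b} is $\approx \tfrac{4}{5}\Phi_0^2 + \tfrac{49}{48}\Phi_j^2$, which is \emph{quadratic} in $\Phi_j$, not comparable to $\Phi_j$: your ``effective doubling'' $\Phi_{j+1}\approx 2\Phi_j$ only happens once $\Phi_j$ is of order one (where the paper shows at most $3$ elements fill $[1/2,\pi]$), so you have the two regimes backwards. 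Your fallback route --- increment bounded below by a fixed multiple of $\Phi_0$, hence $O(\pi/\Phi_0)=O(\sqrt{N})$ steps --- fails for the same reason: near the start the increment is only $\sim \Phi_0^2$, so the best uniform additive lower bound is $\sim\Phi_0^2$ per step, which would give $O(\Phi_0^{-2}) = O(N/|\log\epsilon|)$ steps, nowhere near $3+3\sqrt{N}$. What actually saves the count is multiplicative compounding: the quadratic increment yields $\Phi_j \ge (1+\tfrac{49}{48}\Phi_{j-1})\Phi_{j-1} \ge (1+\tfrac{49}{48}\Phi_0)^j\Phi_0$ as in \eqref{factor-Phi-vorn} (via the concavity estimates \eqref{eq:thm:euler-10}--\eqref{eq:thm:euler-20}), so that, like the ODE $\Phi'=c\Phi^2$, the recursion escapes $[\Phi_0, O(1)]$ in $O(1/\Phi_0)$ steps even though the early increments are only $O(\Phi_0^2)$. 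Even then, a single-phase count $\log(1/\Phi_0)/\log(1+\tfrac{49}{48}\Phi_0) \sim \sqrt{N/|\log\epsilon|}\,\log N$ overshoots the target; the paper therefore runs a two-stage argument --- first from $\Phi_0$ to $|\log\epsilon|\Phi_0$ in at most $2\sqrt{0.6N}$ steps (using $\log|\log\epsilon|/\sqrt{|\log\epsilon|}$ bounded for $\epsilon\le 1/3$), then from there to $1/2$ with the improved ratio $1+\tfrac{49}{48}|\log\epsilon|\Phi_0$, invoking the hypothesis $\log N \le |\log\epsilon|^{3/2}$ to absorb the remaining logarithm --- plus the $3$ large elements, to land on $3+3\sqrt{N}$. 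Without this compounding/two-phase structure your plan cannot close; as a minor positive point, your observation that $\mu<0$ requires keeping $\delta(\lambda e^{i\varphi})$ away from $0$ on the ellipses is a legitimate detail that the paper passes over silently.
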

\begin{proof}
\emph{Proof of \ref{item:thm:euler-i}:}
Recall that $\delta(z) = 1-z$. 
By symmetry of the mesh ${\mathcal T}$ with respect to $\pi$, it suffices to show (\ref{eq:thm:euler-1}) 
for $j=1,\ldots,M$. Note that the case $j = 0$ is trivial as $I^{I_0}_p$ is the identity operator.  
The key step is to check that the construction of the mesh ${\mathcal T}$ is such that 
\begin{equation}	
\label{eq:key-step-thm:euler}
\delta(\lambda( E_\rho(\Phi_j,\Phi_{j+1}))) \subset \mathbb C_+ = \{s\in \mathbb C \, | \, \Re s>0 \}. 
\end{equation}
Details of this calculation are relegated to Appendix~\ref{sec:details-of-thm:euler}. 
Accepting (\ref{eq:key-step-thm:euler}), we get 
for the ellipse $D_\rho(\Phi_j,\Phi_{j+1}) \subset \mathbb{C}$ enclosed by $E_\rho(\Phi_{j},\Phi_{j+1})$, 
the assumption (\ref{Z-bound})
\begin{align*}
\sup_{\Phi \in D_\rho(\Phi_j,\Phi_{j+1})} \left| K\left( \dfrac{\delta( \lambda e^{i\Phi }) }{\tau}\right) \right| \le \frac{CM_0}{\tau^\mu},
\end{align*}
where $C$ is a constant independent of $K$, $\lambda$, $\tau$ and $\Phi$. The approximation result \eqref{eq:thm:euler-1} follows from 
combining the best approximation result Lemma~\ref{lemma:polynomial-approximation} as formulated 
in (\ref{eq:chebyshev-interpolation-error-scaled}). 

\emph{Proof of \ref{item:thm:euler-ii}:} For $\Phi_{j-1}\le 1/2$, one ascertain (see Appendix~\ref{sec:details-of-thm:euler})
	\begin{align}\label{factor-Phi-vorn}
		\Phi_{j} \stackrel{\eqref{eq:thm:euler-10}}{\ge} \Phi_{j-1} - \frac{8}{3} \log \cos (\frac{7}{8} \Phi_{j-1}) 
\stackrel{\eqref{eq:thm:euler-20}}{\ge} \Phi_{j-1} + \frac{8}{6} \frac{49}{64} \Phi_{j-1}^2
\ge \left( 1+ \frac{49}{48} \Phi_{j-1}\right)\Phi_{j-1} \ge \cdots 
\ge \left( 1+ \frac{49}{48} \Phi_{0}\right)^j\Phi_{0}. 
	\end{align}
Let $j_\ast^{\epsilon}:= \min\{j\, \colon \, \Phi_j >  |\log \epsilon | \Phi_0 \}$. We infer, with $\log (1+ x) \ge \frac{1}{2} x$ (valid for $0 \leq x \leq 1$ and noting $\Phi_0 \leq 0.52$)
\begin{align*}
	j_\ast^{\epsilon} \leq \frac{\log|\log \epsilon|}{\log (1 + \frac{49}{48}\Phi_0)}
	\leq 2\frac{48}{49}\frac{\log |\log \epsilon|}{\Phi_0} 
	=2\frac{48}{49}\sqrt{\dfrac{6}{10}N} 
	\dfrac{\log |\log \epsilon|}{\sqrt{|\log \epsilon|}} \le 2 \sqrt{\dfrac{6}{10}N}, 
\end{align*}
where the last inequality holds for $\epsilon\le 1/3$ from a standard curve discussion. If $|\log \epsilon | \Phi_0<1/2$, we iterate this process once, observing
\begin{align*}
	\Phi_{j+j_*^{\epsilon}}  
	\ge \left( 1+ \frac{49}{48} \Phi_{j^{\epsilon}_*}\right)^j\Phi_{j^{\epsilon}_*}
	>
	\left( 1+ \frac{49}{48} |\log \epsilon |\Phi_{0}\right)^j|\log \epsilon|\Phi_0 . 
\end{align*}
In this case, we bound $j_\ast:= \min\{j\, \colon \, \Phi_{j+j_*^\epsilon} >  1/2\}$ via
\begin{align*}
	j_* \le \frac{-\log (2 |\log \epsilon |\Phi_{0})}{\log (1 + \frac{49}{48} |\log \epsilon |\Phi_{0})}
	\le 
	-2\frac{48}{49}\frac{\log (2 |\log \epsilon |\Phi_{0})}{|\log \epsilon |\Phi_{0} }
	\le 
-	2\frac{48}{49}\log \left(2 \sqrt{\dfrac{10}{6} \dfrac{|\log\epsilon|^3}{N}}\right)
	\sqrt{\dfrac{6}{10}\dfrac{N}{|\log \epsilon |^3}} 
. 
\end{align*}

%We iterate this process and set $j_\ast^{2}:= \min\{j\, \colon \, \Phi_j >  \Phi_0 |\log \epsilon |^{1/2}\}$.
%\begin{align*}
%	M \leq j^\ast + 3 \leq 3 - 2 \frac{48}{49 \Phi_0} \log (2\Phi_0) = 
%	3 +  \frac{4 \cdot 48}{49} \sqrt{\frac{6}{10} \frac{N} {|\log \epsilon|}} \log \left(\frac{10}{6} \frac{|\log \epsilon|}{N}\right). 
%\end{align*}
Noting that $\log \cos \frac{7}{8} \frac{1}{2} \approx -0.26$, it is easy to see that at most $3$ elements are contained in $[1/2,\pi]$. 
Hence, we obtain the estimate
\begin{align*}
	M \leq 3 + j^\ast_\epsilon + j^\ast  
	&\leq
	 3 +  
	 \sqrt{\dfrac{6}{10}N} \left( 2  
-	2\frac{48}{49}\log \left( 2 \sqrt{\dfrac{40}{6} \dfrac{|\log\epsilon|^3}{N}}\right)
\dfrac{1}{|\log \epsilon |^{3/2}} 
\right)
\\	&\leq
3 +  
\sqrt{\dfrac{6}{10}N} \left( 2 + 
\frac{48}{49}\log \left(  \dfrac{6}{40} \dfrac{N}{|\log\epsilon|^3}\right)
\dfrac{1}{|\log \epsilon |^{3/2}} 
\right)
\\ & \le
	3+ \sqrt{\dfrac{6}{10}N} \left( 2 
+	\frac{48}{49}
\dfrac{\log \left(N\right)}{|\log \epsilon |^{3/2}} 
-
	\frac{48}{49}
\dfrac{\log \left(40/6 |\log \epsilon |^{3/2}\right)}{|\log \epsilon |^{3/2}} 
\right)
\le 
3 + 3\sqrt{N}
	,
\end{align*}
where we used the assumed $\log N\le |\log \epsilon |^{3/2} $ in the last estimate.
%Let $j^\ast_0:= \min\{j\, \colon \, \Phi_j >  1/2\}$. We infer 
%\begin{align*}
%	j^\ast \leq \frac{-\log (2\Phi_0)}{\log (1 + \frac{49}{48}\Phi_0)}. 
%\end{align*}
%Noting that $\log \cos \frac{7}{8} \frac{1}{2} \approx -0.26$, it is easy to see that at most $3$ elements are contained in $[1/2,\pi]$. 
%Hence, using additionally $\log (1+ x) \ge \frac{1}{2} x$ (valid for $0 \leq x \leq 1$ and noting $\Phi_0 \leq 0.52$)
%\begin{align*}
%	M \leq j^\ast + 3 \leq 3 - 2 \frac{48}{49 \Phi_0} \log (2\Phi_0) = 
%	3 +  \frac{4 \cdot 48}{49} \sqrt{\frac{6}{10} \frac{N} {|\log \epsilon|}} \log \left(\frac{10}{6} \frac{|\log \epsilon|}{N}\right). 
%\end{align*}
%Let $j^\ast:= \min\{j\, \colon \, \Phi_j >  1/2\}$. We infer 
%\begin{align*}
%j^\ast \leq \frac{-\log (2\Phi_0)}{\log (1 + \frac{49}{48}\Phi_0)}. 
%\end{align*}
%Noting that $\log \cos \frac{7}{8} \frac{1}{2} \approx -0.26$, it is easy to see that at most $3$ elements are contained in $[1/2,\pi]$. 
%Hence, using additionally $\log (1+ x) \ge \frac{1}{2} x$ (valid for $0 \leq x \leq 1$ and noting $\Phi_0 \leq 0.52$)
%\begin{align*}
%M \leq j^\ast + 3 \leq 3 - 2 \frac{48}{49 \Phi_0} \log (2\Phi_0) = 
%3 +  \frac{4 \cdot 48}{49} \sqrt{\frac{6}{10} \frac{N} {|\log \epsilon|}} \log \left(\frac{10}{6} \frac{|\log \epsilon|}{N}\right). 
%\end{align*}
\end{proof}
\begin{remark}
\label{rem:motivation-recursion-euler}
The recursion \eqref{eq:rec} is an example of a partition of $[0,\pi]$ that ensures (\ref{eq:key-step-thm:euler}). Generalizations 
can be motivated as follows. Upon writing a complex number as $\varphi = \varphi_r + i \varphi_i$, $\varphi_r$, $\varphi_i \in {\mathbb R}$, 
the condition  that $\Re \delta(\lambda e^{i \varphi}) \ge 0 $ is, for the implicit Euler method with $\delta(\zeta) = 1 - \zeta$, equivalent to 
$1 - \lambda \exp(-\varphi_i) \cos \varphi_r \ge 0$ and thus (for small $\varphi_r$)
\begin{align}
\label{eq:domain-of-analyticity-euler}
\varphi_i \ge \log \lambda + \log \cos \varphi_r \approx \log \lambda - \tfrac{1}{2} \varphi_r^2. 
\end{align}
Depending on which of the two formulas is employed, the ratio of the semiaxes of a Bernstein ellipse for an interval $[\Phi_j, \Phi_{j+1}] \subset {\mathbb R}$ 
are therefore determined by the ratio 
\begin{equation}
\label{eq:ellipse-ratio}
R_1:= \frac{|\log \lambda + \log \cos \Phi_i|}{\Phi_{j+1} - \Phi_j} \approx 
\frac{|\log \lambda - \frac{1}{2} \Phi_i^2|}{\Phi_{j+1} - \Phi_j} =:R_2. 
\end{equation}
Requiring the ratios $R_1$ or $R_2$ to be constant (uniformly in $j$) leads to roughly uniform exponential rates of convergence for polynomial approximation, 
and we are led, depending on which of these two fractions $R_1$, $R_2$ is taken to be constant, to the recursions
\begin{align}
\label{eq:recursion}
\Phi_{j+1} = \Phi_j - c (\log \lambda + \log \cos \Phi_i) 
\qquad \mbox{ or } \qquad 
\Phi_{j+1} = \Phi_j - c (\log \lambda + \tfrac{1}{2} \Phi_j^2) 
\end{align}
for a fixed $c > 0$. 
The first recursion, which is based on $R_1$,  underlies the construction (\ref{eq:rec-b}), while the second one is slightly simpler. Let us focus on the second one. 
Note that $\log \lambda$ may be assumed to be small (cf.\ (\ref{eq:parameter-choice})).
A reasonable starting value $\Phi_0$ for the recursion is $\Phi_0 = O(\sqrt{|\log \lambda|})$. This
can be motivated by the observation that the mesh given by the recursion (\ref{eq:recursion}) changes character 
at the point $\overline{\Phi}:= \sqrt{2 |\log \lambda|}$: the elements $[\Phi_j,\Phi_{j+1}]$ to the left of $\overline{\Phi}$ are 
roughly equal in size and of size $O(|\log \lambda|)$, the elements to the right of $\overline{\Phi}$ grow quickly in size as $j$ increases. 
A more detailed analysis shows that not much is to be gained in terms of number of evaluations of the Laplace transform $K$ by including 
elements to the left of $\overline{\Phi}$. This suggests to select $\Phi_0 = O(\sqrt{|\log \lambda|})$. 
%For the second interval $[\Phi_0,\Phi_1]$, this formula reveals under the assumption $\Phi_1 - \Phi_0 \approx \Phi_0 - 0$
%that $\Phi_0 = O(\sqrt{|\log \lambda|})$. 
Finally, on the first interval $[0,\Phi_0] = [\Phi_{-1}, \Phi_0]$ polynomial interpolation appear inefficient as 
the ratio of the semiaxes of the Bernstein ellipse is $O(|\log \lambda|/\Phi_0) = O(\sqrt{|\log \lambda|})$. 
For the present case of small $\log \lambda$ the polynomial degree could not be chosen merely $O(\log N)$ for good accuracy. 
This motivates taking the operator $I^{\mathcal T}_p$ to be the identity on $[0,\Phi_0]$ and $[\Phi_{2M},\Phi_{2M+1}]$. 
\eremk
\end{remark}

\subsection{Approximation result for the implicit Euler method}\label{sect:approx-implic-euler}
%----------------------------------------
With the approximation result Theorem~\ref{thm:euler} for the interpolation in the Laplace domain, we obtain the following temporal error bound.
\begin{theorem}[error of parsimonious CQ]
\label{thm:perturbation-argument}
	Let $K\,\colon \, \X \rightarrow \Y$ be an analytic family of operators satisfying (\ref{Z-bound}). Let 
the mesh ${\mathcal T}$ be given by (\ref{eq:rec}) and consider  (\ref{eq:forward-cq-interpolation}) based on the implicit Euler method. 
Then there is $C > 0$ independent of $N$, $L$ such that 
for any sequence $(g^n)_{n=1,\dots,N} \subset X$ 
	\begin{align*}
	\sqrt{ N^{-1} \sum_{n=0}^N\|	\left(\cqK(\pt^\tau)g-I^{\mathcal T}_p\cqK(\pt^{\tau,\lambda})\cqg\right)_n \big\|^2_Y}
	&\le C M_0 \tau^{-\mu} 
\left(  N 
\frac{\lambda^{L}}{1 - \lambda^{L}} + \lambda^{-N}  2^{-p}
\right) 
\sqrt{ N^{-1}\sum_{n=0}^N \|g^n\|^2_{\X} 
     }
     .
	\end{align*}
	%The constant $C$ depends on $g$, $M_0$ and the final time $T$. Here, we assumed $\tau^k>\epsm$, although the result can be extended in a straightforward way to an error bound of the order $O(\epsm^{1/2} )$ for smaller $\tau$, which is the expected accuracy of the convolution quadrature method with standard parameters.
\end{theorem}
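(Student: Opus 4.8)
The plan is to bound the total error by the triangle inequality through the intermediate, un-interpolated trapezoidal scheme $\cqK(\pt^{\tau,\lambda})\cqg$, writing
\[
\cqK(\pt^\tau)\cqg - I^{\mathcal T}_p\cqK(\pt^{\tau,\lambda})\cqg = \underbrace{\bigl(\cqK(\pt^\tau)\cqg - \cqK(\pt^{\tau,\lambda})\cqg\bigr)}_{\text{aliasing error}} + \underbrace{\bigl(\cqK(\pt^{\tau,\lambda})\cqg - I^{\mathcal T}_p\cqK(\pt^{\tau,\lambda})\cqg\bigr)}_{\text{interpolation error}}.
\]
I expect the first summand to produce the term $N\lambda^L/(1-\lambda^L)$ and the second to produce the term $\lambda^{-N}2^{-p}$.

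For the interpolation error I would exploit that both \eqref{eq:forward-cq} and \eqref{eq:forward-cq-interpolation} are cyclic discrete convolutions of length $L$ acting on the weighted data $(\lambda^j \cqg^j)_j$, so that the weighted difference $\lambda^n(\cqK(\pt^{\tau,\lambda})\cqg - I^{\mathcal T}_p\cqK(\pt^{\tau,\lambda})\cqg)_n$ is again a cyclic convolution, now with the perturbed symbol $E_l := (\cqK - I^{\mathcal T}_p\cqK)(\delta(\lambda\zeta_L^{-l})/\tau)$ sampled at the \emph{real} frequencies $\varphi_l = -2\pi l/L$. Applying Plancherel's identity for the DFT twice gives $\sum_{n=0}^{L-1}\|\cdot\|_Y^2 \le (\max_l\|E_l\|_{Y\leftarrow\X})^2 \sum_{j=0}^N\lambda^{2j}\|\cqg^j\|_\X^2$, and I would control the output weight $\lambda^{-n}$ by $\lambda^{-N}$ (it is monotone in $n$ and $n\le N$) and the input weight by $\lambda^{2j}\le1$. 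Since the sampling abscissae $\varphi_l$ are real, Theorem~\ref{thm:euler}\ref{item:thm:euler-i} applies verbatim and yields $\max_l\|E_l\|_{Y\leftarrow\X}\le CM_0\tau^{-\mu}2^{-p}$, delivering the second term once the $N^{-1/2}$-normalisation passes through both sides unchanged.

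For the aliasing error I would use the trapezoidal aliasing identity for the quadrature weights, $\cqW_m^\lambda(\cqK) - \cqW_m(\cqK) = \sum_{k\ge1}\lambda^{kL}\cqW_{m+kL}(\cqK)$, obtained by comparing the Fourier coefficients of $\varphi\mapsto\cqK(\delta(\lambda e^{i\varphi})/\tau)$ with their $L$-point trapezoidal samples; the cyclic bookkeeping of \eqref{eq:forward-cq} makes this identity hold uniformly for all lags $m\in\{-N,\dots,N\}$, the negative lags being reached by wrap-around. Bounding each $\|\cqW_{m+kL}(\cqK)\|$ through the Cauchy representation \eqref{eq:CQ-weights} on a circle $|\zeta|=r$ with $\lambda<r<1$, using \eqref{Z-bound} together with the fact that $\delta(\zeta)/\tau=(1-\zeta)/\tau$ has positive real part there, and summing the geometric series in $k$ produces a bound of the form $CM_0\tau^{-\mu}\,(\lambda/r)^{|m|}\,\frac{(\lambda/r)^L}{1-(\lambda/r)^L}$. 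Letting $r\nearrow1$ renders the lag-dependent prefactor harmless and sends the geometric factor to $\lambda^L/(1-\lambda^L)$, so the $\ell^1$-norm of the aliasing-weight sequence over the $O(N)$ relevant lags is $\le CM_0\tau^{-\mu}N\lambda^L/(1-\lambda^L)$. A discrete Young convolution inequality then converts this $\ell^1$-bound into the claimed $\ell^2$-estimate, the $\sqrt N$-weights again cancelling on both sides.

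The main obstacle is the aliasing term rather than the interpolation term, which is essentially a one-line Plancherel estimate once Theorem~\ref{thm:euler}\ref{item:thm:euler-i} is available. The delicate point is to obtain a weight bound of the clean form $CM_0\tau^{-\mu}\lambda^L/(1-\lambda^L)$ uniformly in the lag $m$: one must let the contour radius $r$ approach $1$ to simultaneously make the prefactor $(\lambda/r)^{|m|}$ benign and recover the factor $\lambda^L/(1-\lambda^L)$, while keeping $\sup_{|\zeta|=r}\|\cqK(\delta(\zeta)/\tau)\|$ under control as $(1-\zeta)/\tau$ approaches the imaginary axis at $\zeta=1$ --- a genuinely sensitive issue when $\mu<0$, where \eqref{Z-bound} degenerates near $s=0$ and one must either restrict the radius or invoke a sharper bound on the Euler CQ weights. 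Careful tracking of the cyclic-versus-exact convolution indexing for the negative lags is the other bookkeeping point that must be verified to guarantee the aliasing identity is uniform.
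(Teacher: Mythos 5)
Your proposal is correct and follows essentially the same route as the paper: the same triangle-inequality split into the trapezoidal (aliasing) error and the kernel-interpolation error, the same double application of Parseval with the crude bounds $\lambda^{-n}\le\lambda^{-N}$, $\lambda^{2j}\le 1$ and Theorem~\ref{thm:euler}~\ref{item:thm:euler-i} for the interpolation term, and the same weight-difference bound $\lesssim M_0\tau^{-\mu}\lambda^{L}/(1-\lambda^{L})$ followed by a Cauchy--Schwarz/Young step producing the factor $N$ for the aliasing term. The only difference is that you rederive the weight bounds from the aliasing identity and the Cauchy representation (including the $\mu<0$ and negative-lag subtleties you flag), whereas the paper simply cites them from \cite[Lemmas~3.1--3.2]{banjai-sayas22}.
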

\begin{proof}
It is convenient to introduce for sequences $(x^j)_{j=0}^M \subset Z$ from a Hilbert space $Z$ the norm 
\begin{equation}
\label{eq:weighted-norm}
\|(x^j)_{j=0}^M\|_{\ell^2_M}:= \sqrt{ M^{-1} \sum_{j=0}^M \|x^j\|_Z^2 }. 
\end{equation}
\emph{Step 1:} From standard convolution quadrature analysis as given in \cite[Lemma~{3.1}]{banjai-sayas22} for the weights $\cqW_j(\cqK)$ 
and then \cite[Lemma~{3.2}]{banjai-sayas22}, which relies on the ``aliasing formula'', we have for $L\ge N$
\begin{align}
\label{eq:thm:perturbation-argument-10}
|\cqW_j(\cqK)| &\stackrel{\text{\cite[L.~{3.1}]{banjai-sayas22}}}{\lesssim} M_0 \tau^{-\mu}, \\
\label{eq:thm:perturbation-argument-20}
|\cqW_j(\cqK) - \cqW^L_j(\cqK)| &\stackrel{\text{\cite[L.~{3.2}]{banjai-sayas22}}}{\lesssim} M_0 \tau^{-\mu} \frac{\lambda^{L}}{1 - \lambda^{L}}. 
\end{align}
\emph{Step 2 (error estimate of classical realization of CQ):} 
\begin{align*}
	\left\|\left(\cqK(\pt^\tau)\cqg\right)_n -\left(\cqK(\pt^{\tau,\lambda})\cqg\right)_n \right\|^2_{\ell^2_N} & = 
N^{-1} \sum_{n=0}^N \left\|\sum_{j=0}^n (\cqW_{n-j}(\cqK) - \cqW^L_{n-j}(\cqK)) \cqg^j \right\|^2_\Y \\
& 
\stackrel{(\ref{eq:thm:perturbation-argument-20})}{\lesssim} N^2 \left(M_0 \tau^{-\mu}  \frac{\lambda^{L}}{1-\lambda^{L}}\right)^2 \|(g^j)_{j=0}^n\|^2_{\ell^2_N}. 
\end{align*}
\emph{Step 3 (perturbation error due to kernel interpolation):}
\begin{align*}
         &   N^{-1} \sum_{n=0}^N  \left(  \dfrac{\lambda^{-n}}{L} \bigg\| \sum_{l=0}^{L-
	1}\zeta_L^{ln} \bigg( \cqK\bigg(\dfrac{\delta(\lambda \,\zeta^{-
		l}_{L})}{\tau} \bigg)-I^{\mathcal T}_p \cqK\bigg(\dfrac{\delta(\lambda
	\,\zeta^{-l}_{L})}{\tau} \bigg) \bigg) \Big[ \sum_{j=0}^N \lambda^j
\cqg^j \zeta^{-jl}_{L} \Big] \bigg\|_Y\right)^2 
\\
& \stackrel{\text{Parseval}}{\lesssim}
 \dfrac{\lambda^{-2N}}{L^2} \sum_{l=0}^{L-1}  \left(  \bigg\| \bigg( \cqK\bigg(\dfrac{\delta(\lambda \,\zeta^{-
		l}_{L})}{\tau} \bigg)-I^{\mathcal T}_p \cqK\bigg(\dfrac{\delta(\lambda
	\,\zeta^{-l}_{L})}{\tau} \bigg) \bigg) \Big[ \sum_{j=0}^N \lambda^j
\cqg^j \zeta^{-jl}_{L} \Big] \bigg\|_Y\right)^2 
        \\ 
& \stackrel{\text{Thm.~\ref{thm:euler}}}{\lesssim} 
 \frac{\lambda^{-2N} }{L^2} \left(M_0 \tau^{-\mu} 2^{-p} \right)^2  \sum_{l=0}^{L-1} 
\left\|  \sum_{j=0}^N \lambda^j \cqg^j \zeta^{-jl}_{L} 
         \right\|^2_{\X}
        \\ 
& \stackrel{\text{Parseval}}{=}
\lambda^{-2N  }  \left(M_0 \tau^{-\mu} 2^{-p} \right)^2 
\frac{N}{L}  \|(\lambda^j g^j)_{j=0}^N\|^2_{\ell^2_N} 
\stackrel{\lambda < 1}{\leq  }
 \lambda^{-2N} \left(M_0 \tau^{-\mu} 2^{-p} \right)^2 
\|(g^j)_{j=0}^N\|^2_{\ell^2_N} , 
\end{align*}
where in the final estimate we used again $L\ge N$.
\end{proof}

\begin{remark}
The term $2^{-p}$ in Theorem~\ref{thm:perturbation-argument} is due to the use of the specific mesh ${\mathcal T}$ given by 
(\ref{eq:rec}). Other choices as discussed in Remark~\ref{rem:motivation-recursion-euler} also lead to exponential convergence
in $p$, albeit at a different rate. 
\eremk
\end{remark}
Theorem~\ref{thm:perturbation-argument} allows us to determine good choices of the parameters $\lambda$ and $p$ in dependence on the step size $\tau$. 
We assume $L = N$ for simplicity. 
First, we remark that Theorem~\ref{thm:perturbation-argument} ignores round-off errors due to finite precision arithmetic. 
Incorporating this and following the CQ literature on this issue (see, e.g., \cite[Sec.~{3.1}]{banjai-sayas22})
leads us to replace the term $2^{-p}$ with $\epsilon= \max\{2^{-p}, \epsm\}$. Next, 
$\lambda^{N}$ will be seen to be (asymptotically) small. Hence, estimating 
$N \lambda^N + \lambda^{-N} \epsilon \leq N (\lambda^N + \lambda^{-N} \epsilon)$ and then balancing 
the terms $\lambda^N$ and $\lambda^{-N} \epsilon$ yields 
\begin{equation*}
\lambda^{N} \stackrel{!}{=} \lambda^{-N} \epsilon= \max\{ 2^{-p},\epsm\}   \qquad \Longrightarrow \qquad \log \lambda = \frac{1}{2N} \log \epsilon. 
\end{equation*}
Next, assuming that the convolution quadrature has a convergence behavior 
\begin{equation}
\label{eq:assumed-convergence}
\|\cqK(\pt) g - \cqK(\pt^\tau) g\|_{\ell^2_N} \leq C \tau^k \|(g^n)_{n=0}^N\|_{\ell^2_N}
\end{equation}
for some $k > 0$, we are led to requiring 
\begin{align*}
\tau^k \stackrel{!}{=} N\tau^{-\mu} (\lambda^N + \lambda^{-N} \epsilon)  \approx 2 T \tau^{-\mu -1} \sqrt{\epsilon}. 
\end{align*}
Ignoring the factor $2 T$ leads us to 
\begin{align}
\label{eq:parameter-choice}
\epsilon := \max\{ \tau^{2(k+\mu+1)}, \epsm\} \qquad \mbox{ and } \qquad 
p = \lceil - \log_2 \epsilon \rceil
\qquad \mbox{ and } \qquad \lambda = \epsilon^{{1}/(2N)}.
\end{align}
\begin{corollary}[complexity estimates CQ based on implicit Euler method]
\label{cor:complexity-euler}
Consider (\ref{eq:forward-cq-interpolation}) based on the implicit Euler method and $L = N$. 
Assume that the CQ (\ref{rkcq}) converges such that (\ref{eq:assumed-convergence}) holds. Then the choice of parameters (\ref{eq:parameter-choice})
preserves the convergence rate of the CQ method. 
The size of the polynomial approximation mesh is $\#{\mathcal T} = O(\sqrt{N} )$ and the number of kernel evaluations is therefore $O(\sqrt{N}\sqrt{\log N}+M p )= O(\sqrt{N} \log N )$. 
\end{corollary}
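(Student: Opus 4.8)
The statement bundles three assertions—preservation of the convergence rate, $\#{\mathcal T} = O(\sqrt N)$, and the evaluation count—and I would establish them in that order, the first being the only one carrying genuine content. For rate preservation I would start from the triangle inequality, splitting $\|\cqK(\pt)g - I^{\mathcal T}_p\cqK(\pt^{\tau,\lambda})g\|_{\ell^2_N}$ into the true convolution quadrature error $\|\cqK(\pt)g-\cqK(\pt^\tau)g\|_{\ell^2_N}$, controlled by the hypothesis (\ref{eq:assumed-convergence}) as $C\tau^k\|(g^n)_{n=0}^N\|_{\ell^2_N}$, and the perturbation error $\|\cqK(\pt^\tau)g - I^{\mathcal T}_p\cqK(\pt^{\tau,\lambda})g\|_{\ell^2_N}$, controlled by Theorem~\ref{thm:perturbation-argument}. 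Into the latter bound I would insert (\ref{eq:parameter-choice}) with $L = N$: then $\lambda^N = \epsilon^{1/2}$, so $\lambda^{-N}2^{-p}\le\epsilon^{-1/2}\epsilon = \epsilon^{1/2}$ (using $p=\lceil-\log_2\epsilon\rceil$, hence $2^{-p}\le\epsilon$) and $N\lambda^N/(1-\lambda^N)\le 2N\epsilon^{1/2}$ for $\epsilon$ small. Since $\epsilon^{1/2}=\tau^{k+\mu+1}$ and $N\tau=T$, the perturbation term is $O(M_0\tau^{-\mu}\cdot N\epsilon^{1/2}) = O(M_0 T\tau^k)$ times $\|(g^n)_{n=0}^N\|_{\ell^2_N}$, i.e.\ of the same order $\tau^k$ as the genuine quadrature error. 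This is precisely the balancing that motivated (\ref{eq:parameter-choice}), so no new idea is required.

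Next I would check that (\ref{eq:parameter-choice}) meets the standing hypotheses of Theorem~\ref{thm:euler}. Writing $\tau = T/N$ gives $|\log\epsilon| = 2(k+\mu+1)|\log\tau| = \Theta(\log N)$, whence $-\log\lambda = |\log\epsilon|/(2N)\to 0$ (so $\le 0.08$ eventually) and $\log N\le|\log\epsilon|^{3/2} = \Theta((\log N)^{3/2})$ for $N$ large. Theorem~\ref{thm:euler}\ref{item:thm:euler-ii} then yields $M\le 3+3\sqrt N$, and as ${\mathcal T}$ consists of $2M+2$ intervals, $\#{\mathcal T} = O(\sqrt N)$. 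For the evaluation count I would separate the $2M$ interior intervals, on each of which degree-$p$ Chebyshev interpolation costs $p+1$ evaluations of $K$, contributing $2M(p+1)=O(Mp)=O(\sqrt N\log N)$ since $p = O(\log N)$, from the two identity intervals $I_0$, $I_{2M+1}$, on which $K$ is evaluated directly at every quadrature node $\varphi_l = 2\pi l/L$ lying inside. The number of such nodes in $I_0=[0,\Phi_0]$ is $\approx\Phi_0 L/(2\pi)$, and inserting $L=N$ together with $\Phi_0 = \sqrt{10|\log\epsilon|/(6N)}$ from (\ref{eq:rec-a}) gives $\Theta(\sqrt{N|\log\epsilon|}) = O(\sqrt N\sqrt{\log N})$, and likewise for $I_{2M+1}$. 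Adding the two contributions gives the claimed $O(\sqrt N\sqrt{\log N}+Mp)=O(\sqrt N\log N)$.

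No individual estimate is hard; the one point needing care is the choice of asymptotic regime, which I would make explicit at the outset. All $O$-statements must be read as $N\to\infty$ with $\epsilon = \tau^{2(k+\mu+1)}$, the machine-precision floor $\epsm$ in (\ref{eq:parameter-choice}) serving only as a practical safeguard and not entering the asymptotics. It is exactly in this regime that $|\log\epsilon| = \Theta(\log N)$, and this single fact is what simultaneously forces $p=O(\log N)$, validates the two hypotheses of Theorem~\ref{thm:euler}, and produces the $\sqrt{\log N}$ factor in the node count on the identity intervals; outside it (e.g.\ once $\tau$ is so small that $\tau^{2(k+\mu+1)}<\epsm$ and $\epsilon$ freezes at $\epsm$) the hypothesis $\log N\le|\log\epsilon|^{3/2}$ would eventually fail, so I would flag that the complexity claim is asymptotic in the accuracy-driven regime.
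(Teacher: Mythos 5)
Your proof is correct and follows essentially the same route as the paper: the paper's (very terse) proof likewise inserts (\ref{eq:parameter-choice}) into the complexity estimates of Theorem~\ref{thm:euler} and counts the $O(N\Phi_0)$ direct kernel evaluations on the two non-interpolated elements, while the rate-preservation step you spell out via the triangle inequality and Theorem~\ref{thm:perturbation-argument} is exactly the balancing argument the paper carries out in the discussion immediately preceding the corollary. Two minor points in your favor: your direct count $\Phi_0 L/(2\pi) = \Theta(\sqrt{N|\log\epsilon|})$ is the right computation (the paper's intermediate expression $O(N|\log\lambda|)$ is a typo for $O(N\sqrt{|\log\lambda|})$), and your explicit flagging of the asymptotic regime, including the observation that the hypothesis $\log N \le |\log\epsilon|^{3/2}$ eventually fails once $\epsilon$ freezes at $\epsm$, makes precise what the paper only asserts parenthetically in Theorem~\ref{thm:euler}.
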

\begin{proof}
Insert (\ref{eq:parameter-choice} in the complexity estimates of Theorem~\ref{thm:euler}. Note that 
on $[0,\Phi_0]$ the kernel is not approximated by a polynomial, which leads to $O(N \Phi_0) = O(N |\log \lambda|) = O( \sqrt{N} \sqrt{\log N})$ 
kernel evaluations in the first $[0,\Phi_0]$ and the last element $[\Phi_{2M}, 2\pi]$ of ${\mathcal T}$. 
\end{proof}
\begin{remark}[other meshes]
Remark~\ref{rem:motivation-recursion-euler} describes other meshes ${\mathcal T}$ based on different values $c$ and choices
of recursions in (\ref{eq:recursion}). This changes the exponential convergence behavior $O(2^{-p})$ to $O(e^{-b p})$ for some $b > 0$ 
in Theorems~\ref{thm:euler} and subsequently in Theorem~\ref{thm:perturbation-argument}. 
Relating $p = C |\log \epsilon|$ for suitable $C > 0$ in (\ref{eq:parameter-choice}) then still leads to the same convergence
and complexity estimates as given in Corollary~\ref{cor:complexity-euler}. 
\eremk
\end{remark}
%------------------------------------------
\section{Parsimonious CQ based on higher order methods}\label{sect:higher-order}
%------------------------------------------
%------------------------------------------
\subsection{The case of BDF2} 
%------------------------------------------
The procedure to design meshes ${\mathcal T}$ such that piecewise polynomial approximation of $\varphi \mapsto \cqK(\delta(\lambda e^{-i \varphi}/\tau))$ 
on $[0,2\pi]$ detailed in Section~\ref{sec:implicit-euler} can be generalized to higher order schemes. Let us illustrate the case of BDF2 using
the ideas developed in Remark~\ref{rem:motivation-recursion-euler}. 
Introduce the shorthand $\varphi = \varphi_r + i \varphi_i$, $\varphi_r$, $\varphi_i \in {\mathbb R}$. 
For the BDF2 method, we observe that \eqref{eq:cond-pos-Real} is equivalent to 
\begin{align*}
	0<
	%\Re\left( \frac{3}{2}-2\lambda e^{i\varphi_r -\varphi_i } + \lambda^2e^{i2\varphi_r - 2\varphi_i }\right) = 
	\frac{3}{2}-2\lambda \cos(\varphi_r)e^{-\varphi_i}+\frac{1}{2}\lambda^2 \cos(2\varphi_r)e^{-2\varphi_i}.
\end{align*} 
%\begin{equation*}
%	\delta(\zeta) = \frac{3}{2}-2\zeta+\frac{1}{2}\zeta^2.
%\end{equation*}
For $\varphi_i=0$ the above condition is fulfilled due to the A-stability of the BDF2 method.
Rearranging for $\varphi_i$, applying Taylor series for the appearing logarithms and neglecting higher order terms in $\varphi_r$ yields the two zeros of the right-hand side
%\begin{align*}
%	e^{-\varphi_i^{1,2} } =
%\dfrac{2\lambda\left(1-\frac{\varphi_r^2}{2} \right)}{\lambda^2 (1-2\varphi_i^2)}
%\end{align*}

\begin{align*}
\varphi_1^i = \log(\lambda)-\frac{3}{2}\varphi_r^2 +O(\varphi_r^4), \quad
\varphi_2^i = \log(3)+O(\varphi_r^2) .
\end{align*}
The domain of analyticity is therefore given by the strip defined by 
\begin{align*}
	\log(\lambda)-\frac{3}{2}\varphi_r^2 +O(\varphi_r^4)
	\le
	\varphi_i
	\le
	\log(3)+O(\varphi_r^2) .
\end{align*}
For small $\varphi_r$, the left inequality is much stricter and is the significant restriction on the domain of analyticity. 
We find that the most critical part for the domain of analyticity is for $\varphi_r$ close to zero, where the restriction $\abs{\varphi_i}< \abs{\log(\lambda)}=  \tau \abs{\log(\sqrt{\epsilon})}$ has to be enforced. For small $\varphi_r$, we can use the approximation
\begin{align*}
	\dfrac{\abs{\log(\lambda)}+\frac{3}{2}\Phi_j^2}{\Phi_{j+1}-\Phi_j} = c ,
\end{align*}
where $c > 0$ is then a measure for the domain of analyticity of the scaled function, i.e., $c$ controls the rate of convergence of polynomial
interpolation on $[\Phi_j,\Phi_{j+1}]$. Enforcing a constant exponential convergence gives by rearranging the recursion as 
\begin{align}
\label{eq:rec-BDF2}
	\Phi_{j+1} = \Phi_j + c^{-1}\abs{\log(\lambda)} +\frac{3}{2c}\Phi_j^2 .
\end{align}
Structurally, the recursion is very similar to the one for the implicit Euler method in (\ref{eq:recursion}). 
In the same way as before for the case of the implicit Euler method discussed in 
Section~\ref{sec:euler} we observe analogous to Corollary~\ref{cor:complexity-euler} 
that $O (\sqrt{N}\log N)$ Laplace transform evaluations are sufficient to retain the convergence behavior of the CQ. 

%------------------------------------
\subsection{Higher order methods}
%------------------------------------
For higher order methods, in particular based on the $m$-stage Radau IIA Runge--Kutta method, the behavior of 
$\varphi \mapsto \delta(\lambda e^{- i\varphi}/\tau)$ for $\varphi$ near $\varphi = 0$ implies analogously to the procedure
in Remark~\ref{rem:motivation-recursion-euler} recursions of the form 
\begin{equation}
\label{eq:recursion-radau}
	\Phi_{j+1} = \Phi_j + c^{-1}\left(\abs{\log(\lambda)} +\xi\Phi_j^\kappa \right) .
\end{equation}
Here, $\xi>0$ and $\kappa\in \mathbb N$, $\kappa \ge 2$, are positive parameters determined by the Radau IIA time stepping scheme. 
%The magnitude of the recursion increases only in lower order terms for $\Phi_j$ smaller than $O (\tau^{1/k})$. Analogously to before, we set $\Phi_0=\tau^{1/k}$, which requires the computation of the first $O (N^{\frac{k-1}{k}}))$ Laplace transform evaluations. As a consequence, we obtain the estimate 
As discussed in Remark~\ref{rem:motivation-recursion-euler}, a sensible choice of $\Phi_0$ is $\Phi_0 = O(|\log \lambda|^{1/\kappa})$ and,  
as argued in (\ref{eq:parameter-choice}), $| \log \lambda| = O(N^{-1} \log N)$. 
The mesh ${\mathcal T}$ given by (\ref{eq:recursion-radau}) 
supports exponential convergence in $p$ of the piecewise polynomial approximation. The number of Laplace transform evaluations of the parsimonious CQ
(\ref{eq:forward-cq-interpolation}) can be estimated as follows: 
\begin{align}\label{factor-phi-k}
	\Phi_{j} =(1+c^{-1}\xi\Phi^{\kappa-1}_{j-1}) \Phi_{j-1} > 
(1+c^{-1}\xi\Phi_0^{{\kappa-1} })\Phi_{j-1}
&> \left(1+c^{-1} \xi\Phi_0^{{\kappa-1} }\right)^{j-1} \Phi_0. 
\end{align}
%\begin{align}\label{factor-phi-k}
%	\Phi_{j} =(1+c^{-1}\xi\Phi^{\kappa-1}_{j-1}) \Phi_{j-1} > (1+c^{-1}\xi\tau^{\frac{\kappa-1}{\kappa} })\Phi_j
%	&> \left(1+c^{-1} \xi\tau^{\frac{\kappa-1}{\kappa} }\right)^{j} \tau^{\frac{1}{\kappa}}.
%\end{align}
 Repeating the argument structure of Part (ii) of Theorem~\ref{thm:euler} with $\Phi_{j^\epsilon_*} = O(|\log\epsilon|\Phi_0),$ whose index fulfills $$j^\epsilon_* = O(\log (|\log\epsilon |) / \log (1 + c^{-1} \xi \Phi_0^{\kappa-1}))$$ and subsequently $\Phi_{j_*} = O(1)$ for $j_* = O(|\log \Phi_0|/ \log (1 + c^{-1} \xi \Phi_0^{\kappa-1}))$ 
so that the number of elements of ${\mathcal T}$ is $M = O(\Phi_0^{-(\kappa-1)} |\log \Phi_0|)$. The total number of 
Laplace transform evaluations is, upon selecting $p = O(\log N)$, $|\log \lambda| = O(N^{-1} \log N)$, and $L = N$  
\begin{align}
M p + N \Phi_0   = O( N^{\frac{\kappa-1}{\kappa}} \log N).  
\end{align}
%\begin{align}
%	M p + N \Phi_0   = O( N^{\frac{\kappa-1}{\kappa}} \log^{1+\frac{1}{\kappa}} N).  
%\end{align}

%We thus find that for $ j_*> c\xi^{-1} N^{\frac{\kappa-1}{\kappa}}\log(N^{\frac{\kappa-1}{\kappa}})$, we have $\Phi_{j_*}=O (1)$. 
%In practice, the asymptotic benefit of this treatment is questionable, in particular due to the rapid increase of $\kappa$ and the decrease of $\xi$ in the Radau IIA methods for increasing $m$.
For high order Radau IIA methods, we thus observe a sublinear complexity regarding the number of Laplace transform evaluations. However, the complexity gain 
is diminished for high order methods as $\kappa$ increases with increasing number of stages $m$.

%------------------------------------
\section{The sectorial case}\label{sect:sectorial}
%------------------------------------
For families of operators $K(s)$ that are analytically extendable to a sector larger than a half space, we expect to require significantly 
fewer Laplace transform evaluations. Let us assume a slightly simplified setting, where, for a constant $M>0$ and $\mu\in \mathbb R$, 
\begin{align}\label{Z-bound-sect}
	\norm{K(s)}_{Y\leftarrow X}&\leq M \abs{s}^\mu\quad  \text{ for \ Re }s > -\gamma \abs{\Im s}.
\end{align} 
As before for the implicit Euler in the half-space case,  we motivate the construction of our approximation by tracing the domain of analyticity, which is given by the set 
 \begin{align}\label{eq:cond-sect}
	D^\delta := \{\varphi \in \mathbb C\colon \Re \delta(\lambda e^{i\varphi }) > \Im \delta(\lambda e^{i\varphi })\}.
\end{align}
Our attention is restricted to the implicit Euler method and we intend to follow the argument structure of Remark~\ref{rem:motivation-recursion-euler}, 
to derive a mesh ${\mathcal T}$ without the additional difficulties introduced by the technical treatment conducted in Theorem~\ref{thm:euler}.
With the characteristic function $\delta(\zeta) = 1 - \zeta$ of the implicit Euler in the condition of \eqref{eq:cond-sect} we get  
%\begin{align*}
%\Re 	\delta(\lambda e^{ i\varphi}) >-\gamma\abs{\Im \delta(\lambda e^{i\varphi})},
%\end{align*}
%which is equivalent to 
\begin{align*}
	1-\lambda e^{-\varphi_i} \cos(\varphi_r)> - \gamma\abs{\sin(\varphi_r)}\lambda e^{-\varphi_i}. 
\end{align*}
After rearranging the equation for $\varphi_i$, we arrive at 
\begin{align*}
e^{\varphi_i} > \lambda\left( \cos(\varphi_r)-\gamma \abs{ \sin(\varphi_r) }\right).
\end{align*}
Taking the natural logarithm on both sides yields the desired condition on the imaginary part $\varphi_i$, which reads
\begin{align*}
  \varphi_i > \log(\lambda)+ \log(\cos(\varphi_r) - \gamma \abs{\sin(\varphi_r)} ).
\end{align*}
We note that the right-hand side is generally negative and for small $\varphi_r$, we have the expansion
\begin{align*}
\varphi^{\text{min}}_i = \log(\lambda) - \gamma \varphi_r - \varphi_r^2.
\end{align*}
The situation has now substantially improved, since a term with a lower order dependence on $\varphi_r$ has appeared on the right-hand side. Enforcing a uniform convergence rate for a sequence of intervals, yields again a recursion for the endpoints of the intervals: We set
\begin{align*}
	\dfrac{\abs{\log(\lambda)}+\gamma\Phi_j+\Phi_j^2}{\Phi_{j+1}-\Phi_j} = c .
\end{align*}
Rearranging for $\Phi_{j+1}$ yields 
\begin{align*}
	\Phi_{j+1}= (1+c^{-1}\gamma)\Phi_j+c^{-1}\abs{\log(\lambda)}+c^{-1}\Phi_j^2>
	(1+c^{-1}\gamma)^{j}\Phi_1.
\end{align*}
The magnitude of the interval endpoints therefore increases exponentially in $j$ with a rate that is independent of $\tau$. 
As the starting point, we simply choose $\Phi_0=0$. The total number of elments in ${\mathcal T}$ is there $O(\log N)$. 
On each interval we require again $p = O(\log N)$ Laplace transform evaluations, leading to a total of $O(\log^2 N)$ Laplace transform evaluations. 
%On each interval, we require $k = c^{-1}\log(N)(p+1+\mu)-c^{-1}\log(\sqrt{\epsilon})$ \JMM{sollte das $k+1+\mu$ sein?} function evaluations. Overall, we thus require asymptotically 
%$O(\log (N)^2)$
%function evaluations. 
%-------------------------------------
\section{Implementation and numerical experiments}\label{sect:numerics}
%-------------------------------------
%\JMM{Kann man hier auf die Gitter aus der analysis verweisen? Ggf. die Notation der Analysis anders waehlen; 
%$\Phi_0 > 0$ vorher und $\Phi_{-1} = 0$. Das ist unschoen.}
Let $\Phi_{-1} = 0$ and $\Phi_j$, for $0 \le j \le M$, be the nodes determined by a recursion analogous to \eqref{eq:rec}, with $M$ given by \eqref{eq:M}. We refer to Section~\ref{sec:kernel-approximation} for a detailed description of the mesh.
%We complete the corresponding intervals symmetrically around the midpoint at $\pi$ (by setting $\Phi_{M+j} = 2\pi - \Phi_{M-j}$ for $1\le j\le M$), which yields the monotonically increasing sequence
%\begin{align*}
%	0=\Phi_{-1}<\Phi_1<\dots <\Phi_{2M}=2\pi.
%\end{align*}
%On element $(\Phi_{j-1},\Phi_j)$, we denote the Chebyshev knots by $x^j_k\in (\Phi_{j-1},\Phi_j)$, for $k=0,\ldots,p$, which have the form
%\begin{align*}
%	x^j_k = \Phi_{j-1}+ \frac{\Phi_j-\Phi_{j-1}}{2}\left( \cos\left(\frac{2 k+1}{2 p+2} \pi\right) +1 \right), 
%	\qquad k=0,\ldots,p.
%\end{align*}
Moreover, we denote the corresponding set of Lagrange polynomials by $(l^j_k)_{k = 0,\ldots,p}$. The interpolation operator, again restricted to the interval $(\Phi_{j-1},\Phi_j)$, then has the explicit form 
\begin{align*}
 I^{[\Phi_{j-1},\Phi_j]}_{p}\cqK\left(\dfrac{\delta(\lambda \,e^{-i \varphi})}{\tau} \right) 
 = \sum_{k=0}^p \cqK\left(\dfrac{\delta(\lambda \,e^{-i \varphi^j_k})}{\tau} \right) l^j_k(\varphi), \quad \text{for } \varphi\in (\Phi_{j-1},\Phi_j) .
\end{align*}
Note that assembling this form explicitly, at a given point $x\in [0,2\pi]$,  would generally require matrix--matrix operations and is therefore computationally prohibitive. Similarly to practical considerations for the convolution quadrature method in general, we avoid this difficulty by purely relying on matrix--vector and vector--vector operations.
In the following, we discuss practical aspects of the realization of \eqref{eq:forward-cq-interpolation}, namely, the computation of the convolution
\begin{align}\label{eq:approximate_convolution}
	\left(I^{\mathcal T}_p \cqK(\pt^{\tau,\lambda})\cqg\right)_n =	\dfrac{\lambda^{-n}}{L}\sum_{l=0}^{L-1}\zeta_L^{ln}  
	 I^{\mathcal T}_{p}\cqK\left(\dfrac{\delta(\lambda \,e^{-i 2\pi l /L} )}{\tau} \right)	\widehat g^l,
\end{align}
%\begin{align*}
%\left(I_p \cqK(\pt^\tau)\cqg\right)^n =	\dfrac{\lambda^{-n}}{L}\sum_{l=0}^{L-1}
%\zeta_L^{ln}  I_{p}\cqK\left(\dfrac{\delta(\lambda \,\zeta^{-l}_{L})}{\tau} \right)	\widehat g_l,
%\end{align*}
where again $\zeta_L=  e^{2\pi i  /L}$ and $\widehat g^l \in X$ for $0\le l \le L-1$ is given by 
\begin{align*}
	\widehat g^l =  \sum_{j=0}^N \lambda^j  \zeta^{-jl}_{L} \cqg^j.
\end{align*}
%-------------------------
\subsubsection*{Evaluation at all time points $(t_n)_{n\le N}$}
%-------------------------

We introduce the double index $0=i_0\le \dots \le i_j \le \dots \le i_{K} = 2M+1$ to describe the location of the equidistant points $(y_l)_{l=0}^{L-1}=(2\pi l/L)_{l=0}^{L-1}$ within the interval partitions via 
\begin{align*}
	i_{j-1} \le l \le i_{j} \quad
	\iff 
	\quad y_l \in I_j= [\Phi_{j-1},\Phi_{j}].
\end{align*}
Splitting the sum \eqref{eq:approximate_convolution} into these discrete partitions then yields
\begin{align}\label{eq:compute-all-times}
	\left(I^{\mathcal T}_p \cqK(\pt^{\tau,\lambda})\cqg\right)_n 
	=
	\dfrac{\lambda^{-n}}{L}{\sum_{j=0}^{2M+1}} {}^{\!\! \prime}\,\sum_{l=i_{j-1}}^{i_{j} }\zeta_L^{ln} \left(  
	\sum_{k=0}^pl^{j}_k(y_l)\cqK\left(\dfrac{\delta(\lambda \,e^{-i \varphi_k^j})}{\tau} \right) 
	\widehat{g}^l \right).
\end{align}

Computing $\widehat g^l$ for $0 \le l \le L-1$ and the summation over the index $l$  (which was partitioned via the index $j$) are still realized by the fast discrete fourier method (FFT), as is the case with the standard implementation of the convolution quadrature method. Moreover, $L\cdot (p+1)$ matrix-vector products have to be evaluated.  The prime in the outer $j$-summation denotes that the summands are given by the interpolated expression for $j=1,\dots,2M$ and the summands corresponding to the first and last intervals ($j=0$ and $j=2M+1$) are evaluated through their standard expression \eqref{eq:forward-cq}. In other words, we replace the interpolation $I^{\mathcal T}_p \cqK$ in the first and last interval by the evaluation of the exact operator $\cqK$. In practice, we simply check on each subinterval $I_j$ if the amount of nodes of the global trapezoidal rule exceeds $p+1$ and only then apply the interpolation procedure.  As with the standard convolution quadrature method, only a single evaluation of $K(s)$ has to be stored in memory simultaneously.

\subsubsection*{Evaluation at a single time point $t_n$}
Changing the order of summation in \eqref{eq:compute-all-times} yields a practical realization of one (or a few) time steps $t_n$, where each Laplace domain evaluation is only required to perfom one (or a few, respectively) matrix--vector operations. More precisely, we have
\begin{align*}
	\left(I^{\mathcal T}_p\cqK(\pt^{\tau,\lambda})\cqg\right)_n =	\dfrac{\lambda^{-n}}{L}\sum_{j=0}^{2M+1}{}^{\!\! \prime}\,
	\sum_{k=0}^p\cqK\left(\dfrac{\delta(\lambda \,e^{-i \varphi_k^j})}{\tau} \right) \sum_{l=i_{j-1}}^{i_{j} }\zeta_L^{ln}   l^{j}_k(x_l)
	\widehat{g}^l.
\end{align*}
 The evaluation of this expression requires $(2M+1) \cdot (p+1)$ matrix-vector products, as well as a single FFT evaluation in order to compute $(\widehat g^l)_{l=0,\ldots,L-1}$. 

With this formulation, fast methods for the solution of nonlinear convolution equations can be derived with the classical ideas from \cite{HLS85}.

\subsection{Numerical experiments: A scalar toy example}
The following experiment is taken from \cite[Section 3.5]{banjai-sayas22} and is motivated by the three-dimensional scattering from a sphere with a radially symmetric incoming wave (see \cite[Example~{A.3}]{banjai-sayas22}) and homogeneous Dirichlet boundary conditions. The corresponding analytic function does not extend beyond a half-space and reads 
\begin{align}\label{hyperbolic-kernel}
K(s)=\frac{s}{1-e^{-s}} = s \sum_{j=0}^\infty e^{-js} \quad \text{for } \Re s \ge 0.
\end{align}
Due to the periodically occuring poles along the imaginary axis at $(i2\pi k)_{k\in \mathbb Z}$, this function is not sectorial for any angle.
As the right-hand side, we choose a function that is only differentiable once at $t=0$: 
\begin{align*}
g (t) = \begin{cases}
	0 & t\le 0, \\ 
	\sin(t)^2 & t>0 .
\end{cases}
\end{align*}
%The lack of regularity in the origin causes order reductions of the convolution quadrature method, which is observed in the experiments. 
Using the geometric series expression of $K(s)$, we obtain
\begin{align}\label{eq:experiment-scalar}
	K(\partial_t)g (t) = \sum_{j=0}^\infty \partial_t g(t-j).
\end{align}
We employ the convolution quadrature method, based on different time stepping schemes to the temporal convolution \eqref{eq:experiment-scalar}. We refer to the approximation $\eqref{eq:approximate_convolution}$ as the \emph{modified} convolution quadrature method. 

As the error norm, we use the weighted temporal $l^2$ norm, i.e., 
\begin{align*}
	\text{err} = \sqrt{\tau \sum_{j=0}^{N} |I^{\mathcal T}_p K(\partial_t^{\tau,\lambda})g_n - K(\partial_t)(t_n) |^2 },
\end{align*}
where the exact convolution is computed by \eqref{eq:experiment-scalar}. 

For all experiments, we use the parameter set of Section~\ref{sect:approx-implic-euler}, with $L=N$. The Laplace domain function $K(s)$ in \eqref{hyperbolic-kernel} fulfills the polynomial bound \eqref{Z-bound} with $\mu = 1$, as long as a numerical shift as in Remark~\ref{rem:simpl-K} is applied. Numerically, such a shift was not necessary in our experiments. If necessary, the formulation can either be slightly rewritten via Remark~\ref{rem:simpl-K}, or the intervals defined via the recursion formula \eqref{eq:rec} can be shortened by a constant factor, to take into account that bounds of the absolute modulus of $K(s)$ deteriorate as $\Re s\rightarrow 0$. For the given data $g$, the standard convolution quadrature approximation based on the implicit Euler method converges with the order $k=1/2$ for the implicit Euler method and with order $k=2/3$ for backward differential formulas of order $p=2$. The exponential convergence rate of the interpolation is set to $\rho = 2$. With these settings, we construct $\epsilon,\lambda$ and $p$ according to \eqref{eq:parameter-choice}, throughout the experiments. 
	
As the recursion for the implicit Euler we use an \eqref{eq:rec}, for the BDF-$2$ method we use an adaption of \eqref{eq:rec-BDF2}, where we introduce a positive factor $\gamma$ and set
	\begin{align*}
		\Phi_{j+1} = \Phi_j + \abs{\log(\lambda)} +\gamma\Phi_j^2 .
\end{align*}
In the experiment we conducted, $\gamma = 3/10$ was an effective choice.

As the first experiment, we use an underlying implicit Euler based convolution quadrature approximation. We approximate the convolution until the final time $T=5$ and use various numbers of time steps $N=5\cdot 2^k$ for $k=2,\dots,15$. Fig.~\ref{fig:BDF1-scalar-HS} depicts the results, where the left plot shows the number of Laplace transform evaluations needed in order to achieve the error given on the $y$-axis. The lack of regularity of $g$ in the origin causes order reduction of the convolution quadrature method, which only converges with the error $O(\sqrt{\tau})$. Since the standard convolution quadrature method requires $L=N$ evaluations of the Laplace transform, the overall method converges in the order $O(1/\sqrt{L})$, where $L$ denotes the number of required Laplace transform evaluations. 

The modified scheme roughly doubles the convergence order, with respect to the number of necessary Laplace transform. This is predicted by Theorem~\ref{thm:euler}, since the number of intervals created by the recursion roughly scale with $O(\sqrt{N})$ and the polynomial degree is of the order $p=O(\log N)$. The plot indicates that the logarithmic terms do not play a significant role for practical computations based on the implicit Euler method. On the right-hand side of Fig.~\ref{fig:BDF1-scalar-HS}, we depict the difference between the convolution quadrature method and the modified scheme. By construction (i.e., by setting the parameters as described in Section~\ref{sect:approx-implic-euler}), this difference is of the same order as the error introduced by the convolution quadrature method itself. 

Fig.~\ref{fig:BDF2-scalar-HS} compares the errors of the BDF2 method, the parsimonious BDF2 method, and the trapezoidal rule applied to the same convolution \eqref{eq:experiment-scalar}. The difference in the number of Laplace transforms required to achieve a certain accuracy is not as prevalent here, although the parsimonious BDF2 method does require the least number of Laplace transforms among the methods compared. The difference of the BDF2 method and its parsimonious counterpart is less robust here, although increasing the polynomial degree slightly increases the accuracy to a satisfactory level. 
\begin{figure}
	\centering
	\includegraphics[scale=0.6]{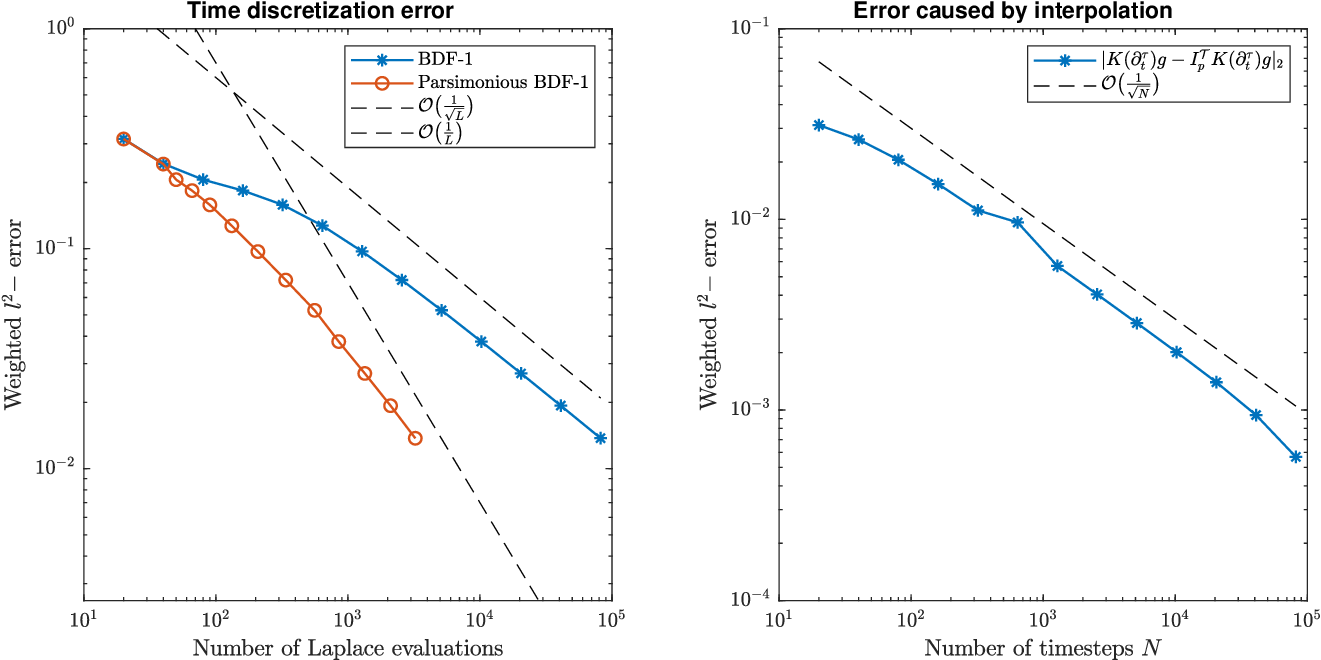}
    \caption{ The convergence of an implicit Euler based convolution quadrature approximation and the corresponding modified method applied to the hyperbolic example \eqref{eq:experiment-scalar}. Shown is the error on the $y$-axis, which is set in relation to the number of Laplace evaluations on the $x$-axis. On the right-hand side, we observe the error introduced by the interpolation used to compute the modified approximation. }
    \label{fig:BDF1-scalar-HS}
\end{figure}
%\begin{figure}
%	\centering
%	\includegraphics[scale=0.6]{}
%\end{figure}
\begin{figure}
	\centering
	\includegraphics[scale=0.6]{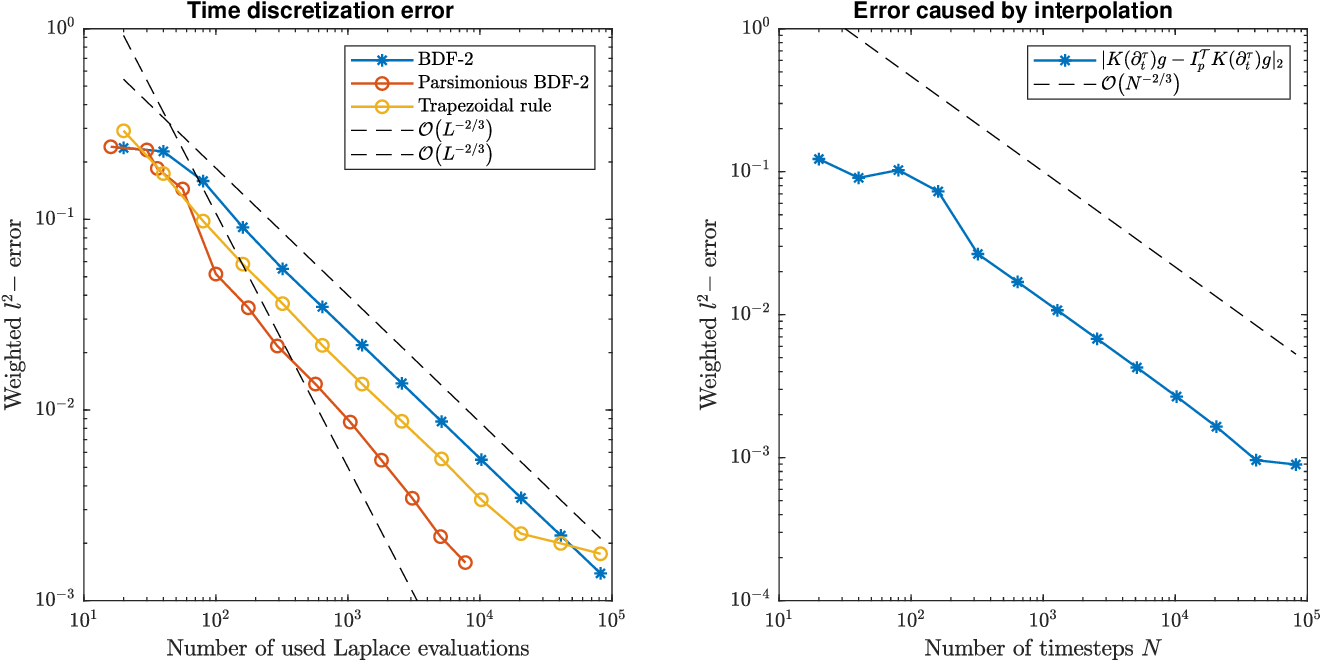}
	\caption{Error vs.\ number of Laplace transform evaluations $L$ for CQ based on different multistep methods applied to the hyperbolic example \eqref{hyperbolic-kernel}. }
    \label{fig:BDF2-scalar-HS}
\end{figure}
%\begin{figure}
%	\centering
%	\includegraphics[scale=0.6]{Figures/BDF2_convergence_manual_finalIntervals2.eps}
%\end{figure}
%---------------------------------------------
\subsubsection*{The sectorial case}
As a sectorial counterpart, we choose the adjusted kernel
\begin{align}\label{sectorial-kernel}
	K_\alpha(s)=\frac{s}{1-e^{-s^{\alpha}}} .
\end{align}
For $\alpha=\frac{2}{3}$, this function is analytic on a sector with an angle of $45$ degrees to the imaginary axis, which corresponds to $\gamma=1$. In order to estimate the errors, we use a reference solution with a implicit Euler based discretization with $N=5\cdot 2^{20}$ time steps. We observe that the modified scheme requires significantly fewer evaluations, however, the slope in the convergence plot is slightly less stable than in Figure~\ref{fig:BDF1-scalar-HS}.
\begin{figure}
	\centering
	\includegraphics[scale=0.6]{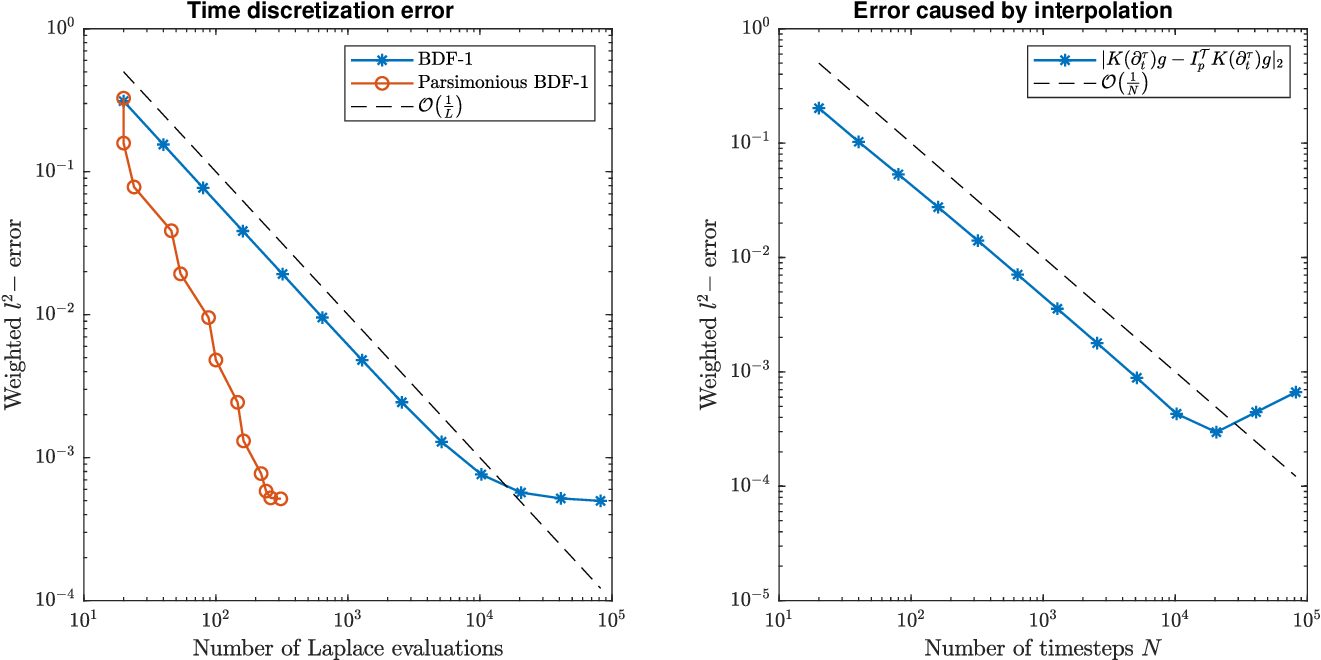}
	\caption{CQ based on implicit Euler applied to the sectorial example \eqref{sectorial-kernel} with $\alpha=\frac{2}{3}$, which corresponds to $\gamma=1$ (i.e., a sector with $45^{\circ}$ angle).
		%Shown is the error on the $y$-axis, which is set in relation to the amount of Laplace evaluations on the $x$-axis. On the right-hand side, we observe the error introduced by the modification of the convolution quadrature method.
		}
   \label{fig:BDF1-scalar-Sect}
\end{figure}
\begin{figure}
	\centering
	\includegraphics[scale=0.6]{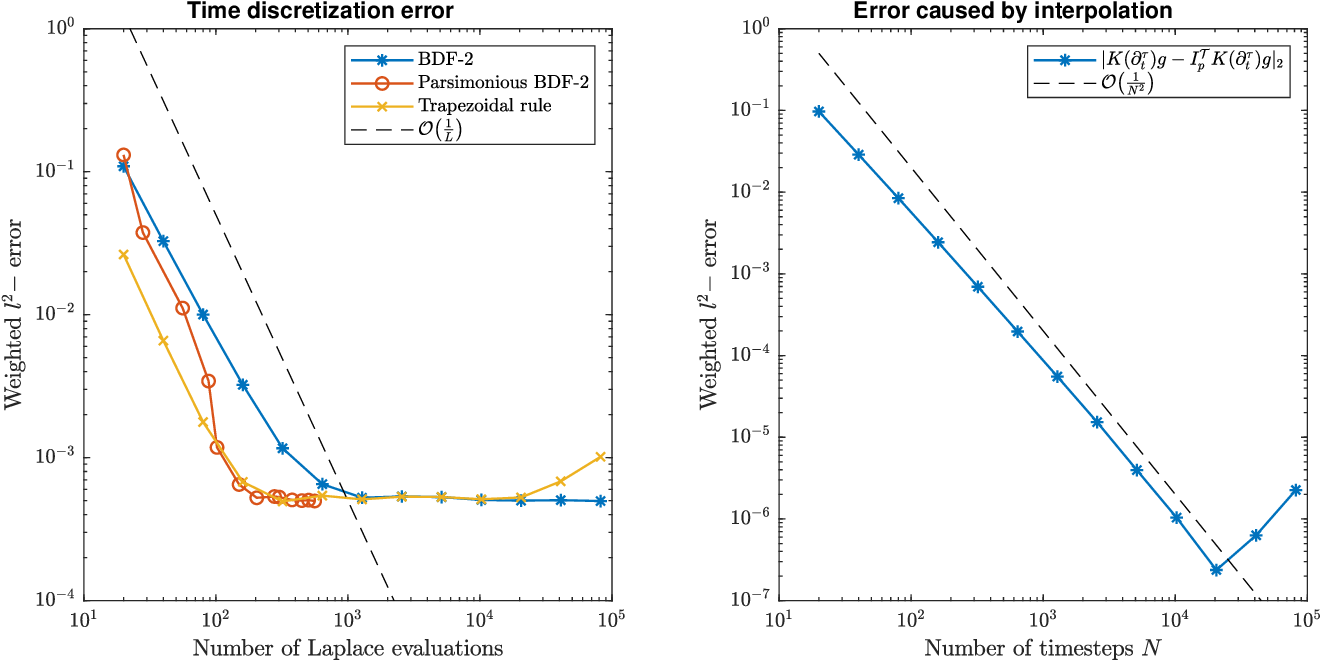}
	\caption{CQ based on different multistep methods of order $2$ applied to the sectorial example \eqref{sectorial-kernel} with $\gamma=1$ (which corresponds to a sector with $45^{\circ}$ angle). 
		%Shown is the error on the $y$-axis, which is set in relation to the amount of Laplace evaluations on the $x$-axis. On the right-hand side, we observe the error introduced by the modification of the convolution quadrature method.
		}
	\label{fig:BDF2-scalar-Sect}
\end{figure}

%\begin{figure}
%	\centering
%	\includegraphics[scale=0.6]{Figures/BDF2_sectorial.eps}
%	\caption{	
%	The convergence of the convolution quadrature approximation based on different multistep methods applied to the sectorial example \eqref{sectorial-kernel} with $\gamma=1$ (which corresponds to a sector with $45^{\circ}$ angle). Shown is the error on the $y$-axis, which is set in relation to the amount of Laplace evaluations on the $x$-axis. On the right-hand side, we observe the error of the interpolation used to compute the modified approximation.}
%\end{figure}
%\begin{figure}
%	\centering
%	\includegraphics[scale=0.6]{Figures/contourline_intervals.eps}
%	\caption{The borders of the intervals $\Phi_j$ for $j\le K$, where the contour stems from the characteristic function of the implicit Euler discretization. The image has been generated with $T=1$ and $N=1000$. }
%\end{figure}

%----------------------------------------------
\subsection{Numerical experiments: An acoustic scattering problem}\label{sect:num-ac}
Finally, we consider an acoustic wave scattering problem. Let $\Omega = {\mathbb R}^3 \setminus [0,1]^3$ with boundary $\Gamma = \partial \Omega$. Further let $u^{\text{inc}} \, : \, [0,T] \times \mathbb R^3 \rightarrow \mathbb R$ be a solution to the acoustic wave equation in the free space $\mathbb R^3$ with sufficient regularity and initial support away from the boundary $\Gamma$. The acoustic scattering problem then reads
\begin{alignat*}{2}
\partial_t^2 u - \Delta u &= 0 \,\quad &&\text{in} \quad \Omega,
\\
u &= -u^{\text{inc}} \, \quad &&\text{on} \quad\Gamma.
\end{alignat*}
As the incoming wave, we choose the temporal plane wave
\begin{align*}
u^{\text{inc}} 
=
e^{-10(x_1 -tj + 4)^2}.
\end{align*}
The initial support of this expression is not zero at the boundary but negligible. We characterize the scattered wave as the solution to a boundary integral equation. Let $S(s) \, \colon \, H^{-1/2}(\Gamma) \rightarrow H^1(\Omega)$ and $V(s) \, \colon \, H^{-1/2}(\Gamma) \rightarrow  H^{1/2}(\Gamma)$ denote the (Laplace domain) single-layer potential and boundary operators , which are defined by the convolutions 
\begin{align}\label{eq:S and V}
S(s)\varphi (x) &= \int_\Gamma \frac{e^{-s|x-y|}}{4\pi |x-y|} \varphi(y) \, \mathrm d y \ \  \text{for} \ \ x \in \Omega, 
\quad\quad\quad
V(s)\varphi (x) = \int_\Gamma \frac{e^{-s|x-y|}}{4\pi |x-y|} \varphi(y) \, \mathrm d y \quad \text{for} \quad x \in \Gamma.
\end{align}
 The exact solution of this scattering problem is explicitly given by 
\begin{align}\label{eq:experiment-scat}
	u = - [SV^{-1}](\partial_t) u^{\text{inc}}.
\end{align}
This convolution is well defined for sufficiently regular incoming waves, since
$S(s)V^{-1}(s)$ fulfills the bound \eqref{Z-bound} for $\mu=2$. This approach in the time domain was originally described in \cite{BHD86} and was used in combination with the convolution quadrature in \cite{L94}. More details are found in, e.g., \cite[Sec.~{4.6}]{banjai-sayas22}.

The boundary integral operators \eqref{eq:S and V} are realized with the boundary element method library {\tt bempp-cl}, \cite{bemppcl}, where lowest elements are chosen. 
%by the \JMM{lowest order} boundary element method functions. To realize these approximations, we use the library bempp-cl \cite{bemppcl}. 
To focus on the error introduced by the time discretization, we fix a grid with $2836$ degrees of freedoms and compare the numerical solution with different time step sizes.

The convolution \eqref{eq:experiment-scat} is approximated until the final time $T=3$, at which we estimate the error of the scattered wave.
 To estimate the error, we compute the scattered wave at different points in the domain $\Omega$. In our experiment, we discretize the cube $[-1.5,3.5]^3$ by an equidistant grid of $1000$ points ($10$ equidistant points in each direction). The approximations at points that are inside the scatterer are disregarded. By using a reference solution with $N= 5120 = 5\cdot 2^{10}$ time steps, we estimate the time discretization error at these points and average the errors. Figure~\ref{fig:BDF1-bempp} shows the results, where we observe that structurally, the behavior of the scalar counterpart depicted in Figure~\ref{fig:BDF1-scalar-HS} carries over. This is to be expected, since the number of necessary Laplace transforms is only slightly dependent on the Laplace transform $K(s)$ (the dependence only enters through the bound $\mu$ in the choice of the parameters as discussed in Section~\ref{sect:approx-implic-euler}).

\section*{Acknowledgment}
JMM gladly acknowledges financial support by Austrian Science Fund (FWF)
throughout the special research program \textit{Taming complexity in PDE systems} (grant SFB F65,
\href{https://doi.org/10.55776/F65}{DOI:10.55776/F65}).

\begin{figure}
	\centering
	\includegraphics[scale=0.6]{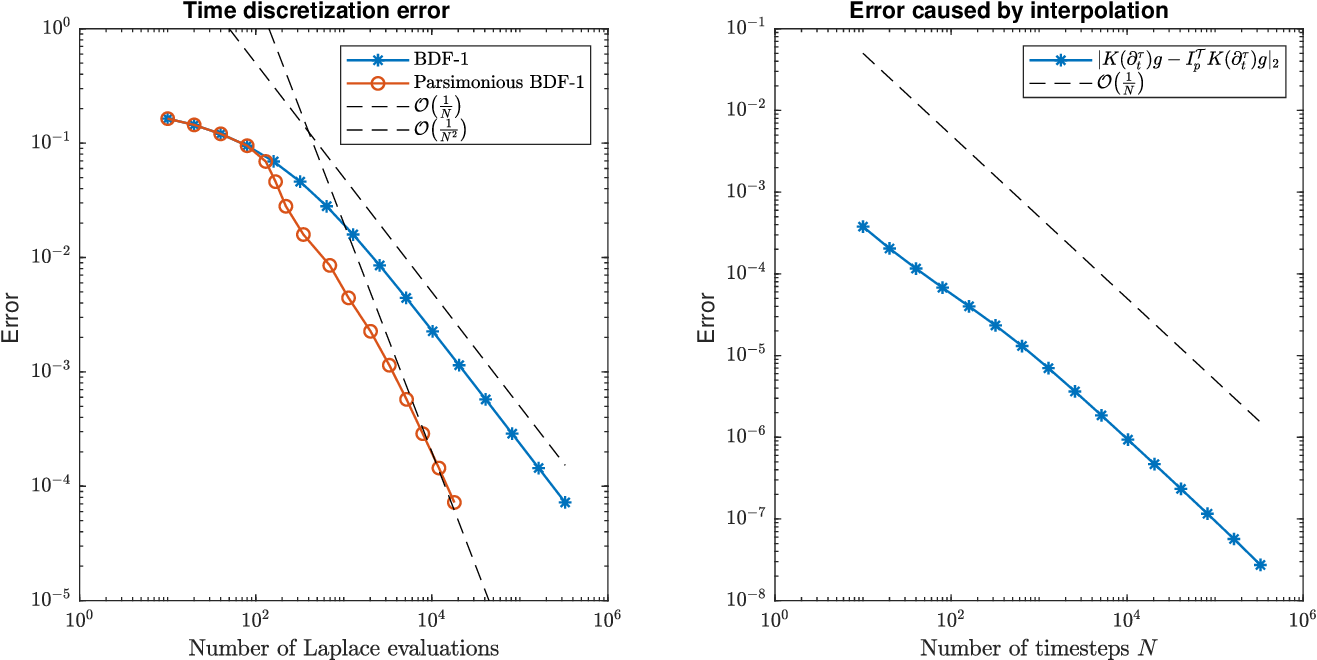}
	\caption{Temporal convergence of CQ based on implicit Euler applied to the convolution \eqref{eq:experiment-scat}, which computes the scattered wave solution to the acoustic scattering problem. The grid has been fixed to a coarse grid with $2836$ degrees of freedom. On the right-hand side, we again observe the error introduced by the modification of the method.}
   \label{fig:BDF1-bempp}
\end{figure}
%-----------------------------------------------------------------
\bibliographystyle{abbrv}
\bibliography{Lit}
%-----------------------------------------------------------------
\appendix
\section{Details of the proof of Theorem~\ref{thm:euler}}
\label{sec:details-of-thm:euler}
%-----------------------------------------------------------------
\begin{numberedproof}{of Theorem~\ref{thm:euler}}

\textbf{Step 0 (2 preparatory estimates):}
\begin{align}
\label{eq:thm:euler-10} 
\forall \Phi \in [\Phi_0,\frac{1}{2}] \quad \colon \quad 
\Phi \ge -\frac{8}{3} \log \left(\cos\frac{7}{8}\Phi\right) - \frac{8}{3} \log \lambda . 
\end{align}
The function $g(\Phi):= \Phi + \frac{8}{3} \log (\cos \frac{7}{8}\Phi)$ is concave so that $\min_{\Phi \in [\Phi_0,1/2]}  g(\Phi) 
 = \min\{ g(\Phi_0), g(1/2)\} \ge \min\{g(\Phi_0), 0.236\} \ge \min\{g(\Phi_0), -\frac{8}{3} \log\lambda\}$ in view of $0.08 \ge - \log \lambda > 0$. 
To see $g(\Phi_0) \ge -\frac{8}{3} \log \lambda$, we write $\Phi_0  = \sqrt{- \frac{5}{4} \frac{8}{3} \log \lambda } \leq \sqrt{\frac{0.8}{3}} \leq 0.52$
and have to show that $g(\Phi_0) + \frac{8}{3} \log \lambda = g(\Phi_0) - \frac{4}{5} \Phi_0^2 \ge 0$. The function 
$\Phi \mapsto g(\Phi) - \frac{4}{5} \Phi^2$ is concave so that $g(\Phi_0) - \frac{4}{5} \Phi_0^2\ge 
\min \{ g(0), g(0.52) - \frac{4}{5} 0.52^2\} = 0$. This shows \eqref{eq:thm:euler-10}. 

From the concavity of $\Phi \mapsto \tilde g(\Phi) := \frac{1}{2} \Phi^2 + \log \cos \Phi$ and $\tilde{g}^\prime(0) = 0$ 
we get $\tilde g^\prime(\Phi)\leq 0$ for $\Phi \in [0,1]$ and therefore $\tilde g(\Phi) \leq 0$ for $\Phi \in [0,1]$ so that 
\begin{align}
\label{eq:thm:euler-20} 
\forall \Phi \in [0,1] \quad \colon \quad 
- \log \cos (\frac{7}{8} \Phi) \ge \frac{1}{2} \frac{49}{64}\Phi^2. 
\end{align}
\textbf{Proof of \ref{item:thm:euler-i}:}
\textbf{Step 1 (Bernstein ellipses are contained in the right half-plane):}
In order to show the interpolation error bound in Step~2 below, we have to ensure that
	$$
\delta(\lambda( E_\rho(\Phi_j,\Phi_{j+1}))) \subset \mathbb C_+ = \{s\in \mathbb C \, | \, \Re s>0 \}
$$
with $\delta(z) = 1-z$ for the implicit Euler method. 
	Inserting the parametrization of the Bernstein ellipsoid on the interval $[\Phi_j,\Phi_{j+1}]$ shows that this property is equivalent to
	\begin{align}\label{est:delta-larger-zero}
		\Re \delta\left(\lambda \exp\left( i\Phi_j +(\Phi_{j+1} - \Phi_j)\frac{i}{2}\left(\dfrac{\rho e^{i\theta} + \rho^{-1}e^{-i\theta}}{2} +1\right) \right)\right) > 0 ,	
	\end{align}
	for all $\theta\in [0,2\pi]$. In order to separate the real and imaginary parts of the argument in the exponential function, we use Euler's formula to obtain
	\begin{align*}
		i(\rho e^{i\theta} + \rho^{-1}e^{-i\theta})
		%&= i\rho (\cos(\theta)+ i\sin(\theta)) +i\rho^{-1}(\cos(\theta) - i \sin(\theta)) \\
		&=
		i\cos(\theta)(\rho+\rho^{-1}) +\sin(\theta)(\rho^{-1}-\rho) .
	\end{align*} 
	Inserting this identity in the argument of the expression on the left-hand side of \eqref{est:delta-larger-zero} yields, again in combination 
        with Euler's formula, the identity
	\begin{align*}
		&\Re \exp\left( i\Phi_j +(\Phi_{j+1} - \Phi_j)\frac{i}{2}\left(\dfrac{\rho e^{i\theta} + \rho^{-1}e^{-i\theta}}{2} +1\right) \right)
		\\=
		&  \exp\left((\Phi_{j+1} - \Phi_j)\frac{\rho^{-1}-\rho}{4}\sin(\theta) \right)
		\cos\left(\Phi_j + (\Phi_{j+1} - \Phi_j)\left(\frac{\rho+\rho^{-1}}{4}\cos(\theta) + \frac{1}{2} \right)\right)
		.	
	\end{align*}
	Inserting $\rho=2$, simplifies the expression to 
	\begin{align}
\label{eq:thm:euler-5} 
		&  \exp\left(-(\Phi_{j+1} - \Phi_j)\frac{3}{8}\sin(\theta) \right)
		\cos\left(\Phi_j + (\Phi_{j+1} - \Phi_j)\left(\frac{5}{8}\cos(\theta) + \frac{1}{2} \right)\right)
		.	
	\end{align}
We now proceed to show (\ref{est:delta-larger-zero}) by distinguishing 
several cases for the interval $[\Phi_j,\Phi_{j+1}]$.

\emph{Proof of (\ref{est:delta-larger-zero}) for $1 \ge \Phi_j \ge  -\frac{8}{3} \log \lambda - \frac{8}{3} \log \cos (\frac{7}{8}\Phi_j)\}$:}  
We estimate 
	\begin{align*}
		&1- \Re \lambda \exp\left( i\Phi_j +(\Phi_{j+1} - \Phi_j)\frac{i}{2}\left(\dfrac{\rho e^{i\theta} + \rho^{-1}e^{-i\theta}}{2} +1\right) \right)
		\\ \stackrel{-\sin \theta \leq 1} {\ge} &
		1 - \lambda\exp\left((\Phi_{j+1} - \Phi_j)\frac{3}{8}\right)
		\cos\Bigl(\Phi_j + \underbrace{(\Phi_{j+1} - \Phi_j)}_{\leq \Phi_j} \underbrace{\left(\frac{5}{8}\cos(\theta) + \frac{1}{2} \right)}_{ \in [-\frac{1}{8}, \frac{9}{8}]}\Bigr)
		\\ > & 
		1 - \lambda\exp\left((\Phi_{j+1} - \Phi_j)\frac{3}{8}\right)
		\cos\left(\frac{7}{8}\Phi_j\right) = 0, 
	\end{align*}
	where the final equality holds by the construction of the recursion \eqref{eq:rec}.

\emph{Proof of (\ref{est:delta-larger-zero}) for $\Phi_j <   -\frac{8}{3} \log \lambda - \frac{8}{3} \log \cos (\frac{7}{8}\Phi_j)\}$  together with $\Phi_j \leq 1$:} 
These assumptions imply
by Step~0 that $\Phi_j \in [1/2,1]$. We estimate 
	\begin{align*}
		&1- \Re \lambda \exp\left( i\Phi_j +(\Phi_{j+1} - \Phi_j)\frac{i}{2}\left(\dfrac{\rho e^{i\theta} + \rho^{-1}e^{-i\theta}}{2} +1\right) \right) \\
 & \qquad \ge 1- \lambda \exp(-\frac{3}{8} \Phi_j \sin\theta) \cos(\Phi_j (\frac{3}{2} + \frac{5}{8}\cos\theta)) 
=: F(\Phi_j,\theta). 
\end{align*}
Using that $\Phi_j \in [1/2,1]$, one can assert
that $\partial_1 F(\Phi_j,\theta) > 0$ for any $\theta$ in the following way: 
$\partial_1 F(\Phi,\theta) > 0$ if $G:=\frac{3}{8} \sin\theta \cos (\Phi(\frac{3}{2} + \frac{5}{8} \cos\theta)) + (\frac{3}{2} + \frac{5}{8} \cos\theta)\sin(\Phi(\frac{3}{2} + \frac{5}{8} \cos\theta)) > 0$. To see this, one observes $\frac{3}{2} + \frac{5}{8} \cos \theta \in [\frac{7}{8}, \frac{17}{8}]$ 
so that for $\Phi \in [\frac{1}{2},1]$ one has 
$(\frac{3}{2} + \frac{5}{8} \cos\theta)\sin(\Phi(\frac{3}{2} + \frac{5}{8} \cos\theta)) 
\ge \frac{7}{8} \min\{\sin(\frac{1}{2} \frac{7}{8}), \sin(\frac{1}{2}\frac{17}{8}\} \ge \frac{7}{8} \sin \frac{7}{16}$.  
Finally, $ G \ge -\frac{3}{8} + \sin \frac{7}{16} \approx 0.048 >  0$.  

The assertion $\partial_1 F(\Phi_j,\theta) > 0$ allows us conclude that 
$\min_{\theta \in [0,2\pi]} F(\Phi_j,\theta) \ge 0$, if $\min_{\theta \in [0,2\pi]} F(1/2,\theta) \ge 0$. The latter follows from
graphical considerations using Mathematica. 

\emph{Proof of (\ref{est:delta-larger-zero}) for $\Phi_j \ge 1$:} 
	For $\Phi_j \ge 1$, the recursion terminates at the next step, i.e., we have $\Phi_{j+1}=\pi$. We can estimate
	\begin{align*}
		& \min_{\theta \in[0,2\pi]} 1- \lambda\exp\left(-(\Phi_{j+1} - \Phi_j)\frac{3}{8}\sin(\theta) \right)
		\cos\left(\Phi_j + (\Phi_{j+1} - \Phi_j)\left(\frac{5}{8}\cos(\theta) + \frac{1}{2} \right)\right)
	\\	
& \qquad \ge 
	 \min_{\theta \in[0,2\pi], \Phi \in [1,\pi]} 1- \exp\left(-(\pi - \Phi)\frac{3}{8}\sin(\theta) \right)
	\cos\left(\Phi + (\pi - \Phi)\left(\frac{5}{8}\cos(\theta) + \frac{1}{2} \right)\right) =:\widetilde{F}(\Phi,\theta) . 
	\end{align*}
We next check that $\nabla \widetilde{F} = 0$ in $(1,\pi) \times [2,2\pi]$ so that $\widetilde{F}$ attains its minium 
for $\Phi \in \{1,\pi\}$. 
Abbreviating $x:= \Phi + (\pi - \Phi) (\frac{5}{8} \cos\theta + \frac{1}{2})$, one compute
\begin{align*}
\partial_\Phi \widetilde{F} & = 
-\lambda e^{-(\pi - \Phi) \frac{3}{8} \sin \theta} \left[ 
\frac{3}{8} \sin\theta \cos x - \sin x (\frac{1}{2} - \frac{5}{8} \cos \theta)
\right], \\
\partial_\theta \widetilde{F} & = 
-\lambda e^{-(\pi - \Phi) \frac{3}{8} \sin \theta} (\pi - \Phi)\left[ 
-\frac{3}{8} \cos\theta \cos x + \sin x \sin \theta
\right] 
\end{align*}
Unless $\cos x = 0$ or $\cos \theta=0$, the condition $\nabla \widetilde{F} = 0$ implies 
\begin{align*}
\frac{5}{8} \sin^2 \theta = (\frac{1}{2} - \frac{5}{8} \cos \theta) \cos \theta. 
\end{align*}
Graphical considerations show that this is not possible. Hence, $\nabla \widetilde{F} \ne 0$ on $(1,\pi) \times [2,\pi]$ and 
thus $\widetilde{F}$ attains its minimum for $\Phi \in \{1,\pi\}$. 
For $\Phi = \pi$ one has $\widetilde{F} > 0$ and for $\Phi = 1$, 
	graphical considerations with Mathematica show $\min_{\theta \in [0,2 \pi]}\widetilde{F}(1,\theta)  > 0$. 
In conclusion, $\widetilde{F} > 0$. 

\textbf{Step 2 (Chebyshev interpolation error estimate):}
Finally, for the ellipse $D_\rho(\Phi_j,\Phi_{j+1}) \subset \mathbb{C}$ enclosing by $E_\rho(\Phi_{j},\Phi_{j+1})$, 
we obtain from Step~1 and the assumption \eqref{Z-bound}
\begin{align*}
\sup_{\Phi \in D_\rho(\Phi_j,\Phi_{j+1})} \left| K\left( \dfrac{\delta( \lambda e^{i\Phi }) }{\tau}\right) \right| \le \frac{CM_0}{\tau^\mu},
\end{align*}
where $C$ is a constant independent of $K$, $\lambda$, $\tau$ and $\Phi$. The approximation result \eqref{eq:thm:euler-1} follows from 
combining this with the best approximation result Lemma~\ref{lemma:polynomial-approximation} as formulated 
in (\ref{eq:chebyshev-interpolation-error-scaled}). 
%
%
%\textbf{Proof of \ref{item:thm:euler-ii}:} For $\Phi_{j-1}\le 1/2$, we have by Step~0
%	\begin{align}\label{factor-Phi}
%		\Phi_{j} \stackrel{\eqref{eq:thm:euler-10}}{\ge} \Phi_{j-1} - \frac{8}{3} \log \cos (\frac{7}{8} \Phi_{j-1}) 
%\stackrel{\eqref{eq:thm:euler-20}}{\ge} \Phi_{j-1} + \frac{8}{6} \frac{49}{64} \Phi_{j-1}^2
%\ge \left( 1+ \frac{49}{48} \Phi_{j-1}\right)\Phi_{j-1} \ge \cdots 
%\ge \left( 1+ \frac{49}{48} \Phi_{0}\right)^j\Phi_{0}. 
%	\end{align}
%Let $j^\ast:= \min\{j\, \colon \, \Phi_j >  1/2\}$. We infer 
%\begin{align*}
%j^\ast \leq \frac{-\log (2\Phi_0)}{\log (1 + \frac{49}{48}\Phi_0)}. 
%\end{align*}
%Noting that $\log \cos \frac{7}{8} \frac{1}{2} \approx -0.26$, it is easy to see that at most $3$ elements are contained in $[1/2,\pi]$. 
%Hence, using additionally $\log (1+ x) \ge \frac{1}{2} x$ (valid for $0 \leq x \leq 1$ and noting $\Phi_0 \leq 0.52$)
%\begin{align*}
%M \leq j^\ast + 3 \leq 3 - 2 \frac{49}{48 \Phi_0} \log (2\Phi_0) = 
%3 +  \frac{4 \cdot 48}{49} \sqrt{\frac{10}{6} \frac{|\log \epsilon|}{N}} \log \left(\frac{10}{6} \frac{|\log \epsilon|}{N}\right). 
%\end{align*}
\end{numberedproof}
%-----------------------------------------------------------------
\end{document}